\theoremstyle{plain}
\newtheorem{theorem}{Theorem}[section]
\newtheorem{lemma}[theorem]{Lemma}
\newtheorem{proposition}[theorem]{Proposition}
\newtheorem{corollary}[theorem]{Corollary}
\theoremstyle{definition}
\newtheorem{definition}[theorem]{Definition}
\theoremstyle{remark}
\newtheorem{remark}[theorem]{\sc Remark}
\newtheorem{example}[theorem]{\sc Example}
\def\bC{{\mathbb C}}
\def\bK{{\mathbb K}}
\def\bR{{\mathbb R}}
\def\cB{{\mathcal B}}
\def\cC{{\mathcal C}}
\newcommand{\Jac}{{\mathrm{Jac}}}
\newcommand{\Sing}{{\mathrm{Sing}}}
\def\const.{{\mathrm const.}}
\def\m{{\setminus}}
\begin{document}
\title[Atypical values and the Milnor set of real polynomials in two variables]
 {Atypical values and the Milnor set of real polynomials in two variables}
 %Atypical values and the Milnor set of real polynomials
\author{\sc Gabriel.  E.  Monsalve}  

\address{Departamento de Matem\'{a}tica, Instituto de Ci\^{e}ncias Matem\'{a}ticas e de Computa\c{c}\~{a}o, Universidade de S\~{a}o Paulo - Campus S\~{a}o Carlos and Laboratoire Paul Painlev\'e, Universit\'e de Lille,
59655 Villeneuve d'Ascq, France }

\email{esteban.monsalve@usp.br}

\thanks{ This work was supported by the grant $\#$2019/24377-2 and $\#$2020/14111-2, S\~{a}o Paulo Research Foundation (FAPESP). 
The author thanks to Prof. Mihai Tib\u{a}r for his guidance in this topic. }

\subjclass[2020]{14D06, 14Q20, 58K05, 57R45, 14P10, 32S20, 14P25.}
%26C05Real polynomials:  analytic properties, etc. 
\date{\today}

\keywords{}

%\date{\mbox{}} 

%\dedicatory{}

%\commby{}

%%% ----------------------------------------------------------------------

\begin{abstract}
We give a new algorithmic method of detection of atypical values for 2-variables real polynomial functions with emphasis on the effectivity.
\end{abstract}

\maketitle
%%% ---------------------------------------------------

\setcounter{section}{0}

\section{Introduction}
Let $ f: \bK^2 \to \bK$ be a non-constant polynomial function where $ \bK = \bR$ or $ \bC$.  It is well known that  $ f$ is a $ \cC^\infty$-trivial fibration outside a finite set in $\bK$, see \cite{Th,V}.   The smallest set $ \cB_f \subset \bK $ where $ f$ is a $ \cC^{\infty}$-trivial fibration outside $ \cB_f$ is called \textit{bifurcation set of }$f$ or \textit{set of atypical values of $f$}. The set $ f(\text{Sing}f)$ of critical values of $f$ is contained in $ \cB_f$. However,  $\cB_f$ may contain regular values, as the classical example $ f(x,y)=x+x^2y$ shows, since $ \text{Sing}f = \emptyset $ and  $\cB_f = \{0\}$.  Therefore, the bifurcation set $ \cB_f$ is the union of the finite set of critical values of $f$ with the set of atypical values which are regular values.  The problem of finding $ \cB_f$ relies on the detection of regular atypical values. 
For a complex polynomial  ($ \bK = \bC$) the following characterization of atypical values was found  in \cite{S}, and later in \cite{HL}: a regular value $ \lambda$ is atypical if and only if $ \chi(f^{-1}(\lambda)) \neq \chi(f^{-1}(t))$, where $ f^{-1}(t)$ is a general fibre and $ \chi$ denotes the Euler characteristic.  In 1995 this characterization was extended  in \cite{P} and \cite{ST} to polynomials $ f: \bC^n  \to \bC$ with \textit{isolated singularities at infinity}. 

In the real setting,  the variation of the Euler characteristic of the fibres is not enough to characterize the atypical values.  In \cite{TZ} the authors proved that for a function $ f : X \to \bR$ which is the restriction of a polynomial function $ F: \bR^n \to \bR$ to a smooth non-compact algebraic surface $ X \subset \bR^n$,  the existence of an atypical value is related to two phenomena which may occur at infinity: \textit{vanishing} and \textit{splitting} at infinity.  The next aim was to give a method to detect the vanishing and splitting at infinity.  In \cite{DJT} the authors present an effective algorithm that detects vanishing and splitting at infinity of a polynomial in two real variables based on the localization of the behavior of fibres at certain points at infinity.  In what concerns the detection of atypical values, the authors in \cite{CP} use certain truncated parametrizations of semialgebraic affine curves without treating the effectivity aspect.

In this paper we present  a method of detection of atypical values without the localization of the behavior of the fibres at infinity.  We use the notion of \textit{clusters of Milnor} arcs in order to detect the phenomena of vanishing and splitting as defined in \cite{DJT}. This notion is inspired from \cite{HN} where the authors consider clusters of the unbounded components of the curve  of tangency between the fibres of a rational function and the levels of the distance function.  In \cite{CP} the authors use clusters of the unbounded components of the polar curve.  

The detection of regular atypical values in this paper has mainly two new aspects: 

\noindent
(i) we give an effective way of detecting a compact set in $ \bR^2 $ such that on its complement the clusters are well-defined.  In \cite{CP,HN} the existence of such compact set is theoretically proved with semi-algebraic properties but the effectivity is not treated.  We also remark that in \cite{CP} and \cite{HN} one uses the notion of clusters to detect \textit{cleaving} instead of splitting,

\noindent
(ii) Theorem \ref{Thm: injFunctAlpha} proves a  injective correspondence between clusters and connected components of fibres of $f$ outside an  effectively determined compact set contained in the compact set in (i).
%\smallskip
% The main result of this paper is Theorem \ref{The:Atyp and Cluster} and it gives a characterization of atypical values in terms of the parity of certain clusters.  The proof of the main result is based on the injective correspondence between connected components of fibers and clusters presented in Theorem \ref{Thm: injFunctAlpha}.  There are presented some properties of the Milnor set of $f$ which lead to the effective detection of atypical values.
%\bigskip
%our algorithm comes from a detailed study of the Milnor set of a polynomial function $f : \bR^2 \to \bR$ and a effective truncation process of the Newton-Puiseux process to detect a subset of $\bR$ containing the regular atypical values of $f$. The latter truncation process has been already presented in \cite{DJT}.  

The paper is organized as follows:
in Section \ref{sec:MilnorSet} the \textit{Milnor set} $M_a(f)$ (see Definition \ref{Def:MilSet}) is introduced and a characterization of the \textit{primitiveness} (see \cite{DJT}) of a polynomial $f$ with respect to the dimension of  $M_a(f)$ is presented in Proposition \ref{Prop:StrenDTLemm} and Corollary \ref{cor:PrimitEquivDimM(f)1}.  The main result of this section shows that the non-primitiveness with respect to $\rho_a$ occurs for at most one point $ a\in \bR^2 $.  This represents an improvement of \cite[Lemma 2.3]{DT} in the case of polynomials in two variables.  
Under the primitiveness assumption, the set of $ \rho$-\textit{nonregular} points $ \mu(M(f))$ of the Milnor set centred at the origin $ M(f)$ (see Definition \ref{Def: MM }) is used to detect effectively the compact components of  $M(f)$,  see Theorem \ref{theo:charcCompCompo}.  The  \textit{Milnor radius at infinity} is introduced and will play an important role in the effectiveness of the detection.  

In Section \ref{Milnor arcs at infinity} the \textit{Milnor arcs at infinity of $f$} are defined (see Definition \ref{Def:MilArcInft}) and the constant type of tangency between the fibres and circles along the Milnor arcs at infinity is proved.

In Section \ref{sec":Order&Clust} the definition of cluster from \cite{HN} is adapted to our setting.  In this paper the clusters are called $ \mu$-clusters in order to point out the difference with the polar clusters in \cite{CP}.

In Section \ref{sec:injectiveFunction} it is proved that there is  an injective function between $\mu$-clusters associated to a regular value (see Definition \ref{d:cluster}) and non-compact fibres of $f$ outside a disk of large enough radius. This is important for relating the existence of atypical values at infinity with $\mu$-clusters in Section \ref{sec:AtipcDetect}.   

Theorem \ref{The:Atyp and Cluster} is our main result,  it characterizes the regular atypical values in terms of the parity of the $\mu$-cluster. More precisely: a regular value is atypical if and only if there exists an odd $\mu$-cluster associated to it.  The proof of this theorem also shows that the existence of an odd  $\mu$-cluster associated to a regular value is equivalent to the existence of either vanishing or splitting at infinity.    

In Section \ref{sec:AlgotithAspec} we give the algorithmic aspects of the detection of atypical values with special emphasis on effectivity.  This process has several steps:  finding the Milnor radius,  finding the values associated to the Milnor arcs,  finding the $\mu$-clusters, and finally finding the atypical values. The algorithm is applied in the examples of Section \ref{sec:Examples}.  These examples show  intriguing phenomena: Example \ref{Ex1} shows that both vanishing and splitting at infinity may occur at a single atypical value.  Example \ref{Ex2} shows that clusters may exist at $\lambda \in \bR$ without $\lambda$ being an atypical value.

\section{Milnor set and primitive polynomials}\label{sec:MilnorSet}

 Let $f : \bR ^2 \rightarrow \bR $ be a  non-constant polynomial function.
Let $\rho_a : \bR^2 \rightarrow \bR _{\geq0}  $, $\rho_a(x,y)=(x-a_1)^2+(y-a_2)^2 $ be the square of the  Euclidean distance to $a:= (a_1 , a_2) \in \bR^2$. 
% and $S_{R}$the boundary of the topological closure of $D_R$.
After \cite{T1, T2, ACT1, ACT2} etc, one defines:
\begin{definition}\label{Def:MilSet}
The \textit{Milnor set of  $f : \bR ^2 \rightarrow \bR$ relative to} $\rho_{a}$ is the set of $\rho_a$-\textit{nonregular} points of $f$, namely:
$$M_a(f) := \{ (x,y) \in \bR^2 \mid  \rho_a \not\pitchfork_{(x,y)} f  \}.$$
\end{definition}
Equivalently, the Milnor set $M_a(f)$ is  the zero locus defined by the determinant of the Jacobian  matrix of the mapping $F_a:= ( f, \rho_a ): \bR^2 \rightarrow \bR^2$, namely $\Jac F_a(x,y) =0$.  In particular, if $a= (0,0)$ we denote $ \rho_a $ by $ \rho$  and $M_a(f)$ by $M(f)$.

\medskip

It is well-known that there exists an open dense set $ \Omega_f \subset \bR^2$ such that $M_a(f)$ is a curve for every $a \in \Omega_f$. Indeed, this holds for a polynomial function $f$ in any number of variables, cf   \cite{T1, T2, ACT1, ACT2,DTT}.  We want here a more effective result in 2 variables, and some more information on $M_a(f)$.

\begin{definition}\cite{DJT}
We say that a polynomial $f$ is \textit{primitive relative to} $\rho_a$, if $f \neq P \circ \rho_a$ for every polynomial $ P$ in one variable. 
\end{definition}

The following proposition extends \cite[Lemma 2.3]{DT} for polynomials in two variables.
%%%%
\begin{proposition}\label{Prop:StrenDTLemm}
Let $ f : \bR^2 \to \bR$ be a non-constant polynomial function. Then: 
\begin{enumerate}
 \rm \item \it  For  any  $b \in \bR^2$, $M_b(f)$ is unbounded and intersects all fibres of $f$. 
\rm \item \it There exists at most one point $a \in \bR^2$ such that $f$ is not primitive relative to  $\rho_a$.
\rm \item \it  If $f$ is primitive with respect to $ \rho_b$, then $\dim (M_{b}(f) \setminus \Sing f)=1$.   
\end{enumerate}
\end{proposition}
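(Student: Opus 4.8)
The backbone of all three parts is the explicit form of the Milnor set. Writing $F_b=(f,\rho_b)$, a direct computation gives $\Jac F_b(x,y)=2\big[(y-b_2)f_x-(x-b_1)f_y\big]$, so $M_b(f)$ is the zero locus of the single polynomial $g_b:=(y-b_2)f_x-(x-b_1)f_y$. Since $g_b$ vanishes wherever $\nabla f=0$, we always have $\Sing f\subseteq M_b(f)$. The first thing I would record is the equivalence that drives the primitiveness statements: in polar coordinates centred at $b$, the polynomial $g_b$ is, up to sign, the angular derivative of $f$, so $g_b\equiv 0$ exactly when $f$ is invariant under rotations about $b$; for a polynomial this is equivalent to $f=P\circ\rho_b$ for some one-variable polynomial $P$ (the ring of rotation invariants is generated by $\rho_b$). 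Thus $f$ is primitive relative to $\rho_b$ if and only if $g_b\not\equiv 0$, i.e.\ if and only if $M_b(f)$ is a genuine algebraic curve rather than all of $\bR^2$.

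For (a) I would use that $\rho_b$ is continuous and proper while each nonempty fibre $f^{-1}(\lambda)$ is closed. Hence $\rho_b$ attains a minimum on $f^{-1}(\lambda)$ at some point $p$. If $\nabla f(p)=0$ then $p\in\Sing f\subseteq M_b(f)$; otherwise the fibre is a smooth curve near $p$, and minimality of $\rho_b|_{f^{-1}(\lambda)}$ forces $\nabla\rho_b(p)\parallel\nabla f(p)$, i.e.\ $g_b(p)=0$. Either way $M_b(f)$ meets the fibre, so it meets every fibre. Unboundedness is then immediate: restricting $f$ to a line along which it is non-constant shows the image of $f$ is unbounded, and since $f$ is bounded on every disk, a set meeting arbitrarily high (or low) fibres cannot be bounded.

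For (b) I would argue by contradiction. Suppose $f=P\circ\rho_a=Q\circ\rho_b$ with $a\neq b$. Then at every point $\nabla f$ is simultaneously a scalar multiple of the radial vector $v_a:=(x-a_1,y-a_2)$ and of $v_b:=(x-b_1,y-b_2)$. Off the line $\ell$ through $a$ and $b$ the vectors $v_a,v_b$ are linearly independent, which forces $\nabla f=0$ there; vanishing on the open dense set $\bR^2\setminus\ell$, the continuous field $\nabla f$ vanishes identically, so $f$ is constant, a contradiction. Hence non-primitiveness can occur for at most one centre.

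Finally, for (c), primitiveness gives $g_b\not\equiv 0$, so $M_b(f)$ has dimension at most $1$. The remaining, and to my mind the only delicate, point is to guarantee that the one-dimensional part is not entirely absorbed into $\Sing f$. Here I would invoke that $f(\Sing f)$ is a semialgebraic subset of $\bR$ of measure zero (Tarski--Seidenberg together with Sard), hence finite, so the regular values are cofinite and in particular infinitely many. For each regular value $\lambda$ the point produced in (a) lies on a fibre disjoint from $\Sing f$, hence belongs to $M_b(f)\setminus\Sing f$; letting $\lambda$ range over the infinitely many regular values yields infinitely many points of $M_b(f)\setminus\Sing f$ with distinct $f$-values. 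A semialgebraic set that is infinite cannot be $0$-dimensional, and this one is contained in the curve $M_b(f)$, so $\dim\big(M_b(f)\setminus\Sing f\big)=1$.
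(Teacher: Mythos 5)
Your proposal is correct, and in part (b) it takes a genuinely different route from the paper. The paper first reduces non-primitiveness to $\dim M_a(f)=2$ (its Lemma \ref{Lem:DimM(F)=1}, which it proves by the polar-coordinate computation you instead cite as the classical description of rotation invariants), and then, assuming $f=P\circ\rho_a$, explicitly computes $\Jac(P(\rho_a),\rho_b)=4P'(\rho_a)\bigl((a_2-b_2)x+(b_1-a_1)y+a_1b_2-a_2b_1\bigr)$, exhibiting $M_b(f)$ for $b\neq a$ as a line union finitely many circles, hence of dimension $1$, hence $f$ is primitive relative to $\rho_b$. Your argument --- two centres of non-primitiveness force $\nabla f$ to be parallel to two radial fields that are independent off the line through $a$ and $b$, so $\nabla f\equiv 0$ and $f$ is constant --- is shorter and cleaner, though the paper's computation buys the extra structural information (line plus circles) that it does not strictly need here. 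In (a) your "intersects all fibres" step is the same Lagrange/minimum argument as the paper's; for unboundedness the paper tangentially uses maxima and minima of $f$ on large circles $C_b$, whereas you deduce it from the fibre-intersection property together with unboundedness of $\mathrm{Im}\,f$ --- which is in fact exactly the argument the paper itself uses later inside its proof of (c), so this is a legitimate reorganization. In (c) the paper concludes from unboundedness of $M_b(f)\setminus\Sing f$ that some component is $1$-dimensional, while you count infinitely many points with distinct $f$-values in the semialgebraic set $M_b(f)\setminus\Sing f$ and use that an infinite semialgebraic set has dimension at least $1$; both are sound, and both rest on the same fact that $\Sing f$ lies in finitely many fibres. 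The only place where you lean on an unproved assertion is the identification of rotation-invariant polynomials with polynomials in $\rho_b$; the paper supplies that proof (evaluation at $\theta=\pi/4$ and $5\pi/4$), so if you want a self-contained argument you should include something equivalent.
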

\begin{proof}
(a). If $ \Sing  f$ is unbounded, then $M_b(f)$ is so,  since $\Sing  f \subset M_b(f)$ by Definition \ref{Def:MilSet}. Otherwise, for  all large enough circles $C_b$ centred at $b$, the restriction of $f$ to $ C_b$ has at least a maximum and a minimum point. These points are among the points where some fibre is tangent to the circle $C_b$,  and thus these points belong to $M_b(f)$ by definition. This proves that $M_b(f)$ is unbounded.  

Let us prove that any fibre $f^{-1}(t)$ intersects $M_b(f)$. If $t$ is a critical value of $f$, then the statement holds since $\Sing f  \subset M_{b}(f)$. If $t$ is a regular value, then the Euclidean distance from the point $b$ to the fibre $f^{-1}(t)$ has a minimum and thus $f^{-1}(t)$ intersects $M_b(f)$ along such points of minimum. 

\smallskip
\noindent
(b).  If $\dim M_{a}(f) = 2$, then  we have $ f= P \circ \rho_a$ for some polynomial $ P $ of one variable by Lemma \ref{Lem:DimM(F)=1} below. For any $b = (b_1,b_2) \neq a = (a_1,a_2)$, the set  $M_{b}(f) $ is defined by the equation:
\[ \Jac  (P(\rho_a), \rho_b)
   = 4P^{\prime}(\rho_a)( (a_2-b_2)x + (b_1 - a_1)y + a_1 b_2 - a_2 b_1 ) = 0,
\]where $P^\prime$ denotes the derivative of $P$. Hence $M_{b}(f)$  is the union of a line with the set  $ \{P^{\prime}(\rho_a) =0 \} $, where the later is a union of finitely many circles centred at $a$ since $P^\prime(t)=0 $ has finitely many solutions for $t>0$. This shows that $ \dim  M_{b}(f) \leq 1$. Moreover, for any $b \neq a$, the set $\{(a_2-b_2)x + (b_1 - a_1)y + a_1 b_2 - a_2 b_1 =0 \}$ is a nonempty line in $\bR^2$ passing through $b$. This proves that  $ \dim  M_{b}(f) = 1$ and $M_b(f)$ is unbounded, for any $b \neq a$. 

\smallskip
\noindent
(c).
If $f$ is primitive with respect to $\rho_b$, then $  \dim M_b(f) < 2 $ by Lemma \ref{Lem:DimM(F)=1}.  Since $M_b(f)$ intersects all fibres of $f$ and since ${ \text{Im}}f$ is unbounded, it follows that $M_b(f)$ is unbounded. Moreover, since $\Sing  f$ is contained in finitely many fibres, it also follows that $M_b(f) \setminus \Sing  f$ is unbounded.  Therefore there exists an unbounded component of  $ M_{b} (f) $ intersecting all fibres, and thus $ \dim (M_b(f)\setminus \Sing f)=1$.  
\end{proof}

\smallskip
The following lemma characterize the primitiveness of a polynomial function $ f$ relative to $ \rho_a $.  When the center $a$ is the origin of $ \bR^2$,  Lemma \ref{Lem:DimM(F)=1} coincides with \cite[Remark 2.4]{DJT}.

\begin{lemma}\label{Lem:DimM(F)=1}
Let $ f : \bR^2 \to \bR$ be a polynomial function and let $ a =(a_1,a_2)  \in \bR^2$. The following conditions are equivalent:

\begin{enumerate}
\rm \item \it $\dim  M_a(f) = 2$,
\rm \item \it $  \Jac F_a = 0$, 
\rm \item \it $f$ is not primitive with respect to $\rho_a$.
\end{enumerate}
\end{lemma}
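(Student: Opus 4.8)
The plan is to prove the three-way equivalence by first disposing of (a)$\Leftrightarrow$(b), which is purely a matter of dimension, and then establishing (b)$\Leftrightarrow$(c) by an explicit computation of the Jacobian. For the first equivalence I would simply use that, by Definition \ref{Def:MilSet}, $M_a(f)$ is the real zero locus of the single polynomial $\Jac F_a$. A nonzero polynomial in two real variables cannot vanish on a set with nonempty interior (by analyticity such vanishing forces the polynomial to be identically zero), so its zero set is a proper algebraic subset of $\bR^2$, of dimension at most $1$; the zero polynomial, on the other hand, vanishes everywhere. Hence $\dim M_a(f)=2$ holds precisely when $\Jac F_a\equiv 0$, which is exactly (a)$\Leftrightarrow$(b).

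For (b)$\Leftrightarrow$(c) I would first record the explicit expression
\[
  \Jac F_a = 2\bigl[(y-a_2)\, f_x - (x-a_1)\, f_y\bigr],
\]
obtained by expanding the determinant of the Jacobian of $F_a=(f,\rho_a)$ and using $(\rho_a)_x=2(x-a_1)$ and $(\rho_a)_y=2(y-a_2)$. After the translation $u=x-a_1$, $v=y-a_2$ --- under which $\rho_a$ becomes $u^2+v^2$, $f$ becomes a polynomial in $(u,v)$, and the property $f=P\circ\rho_a$ is preserved --- I may assume $a=(0,0)$, so that $\Jac F_a\equiv 0$ reads $v f_u = u f_v$. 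The implication (c)$\Rightarrow$(b) is then immediate: if $f=P(u^2+v^2)$, the chain rule gives $f_u=2u\,P'$ and $f_v=2v\,P'$, whence $v f_u - u f_v = 0$.

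The substantive step, and the one I expect to be the main obstacle, is (b)$\Rightarrow$(c): from $v f_u = u f_v$ I must recover a \emph{polynomial} $P$ with $f=P(u^2+v^2)$. The equation says precisely that $f$ is annihilated by the infinitesimal rotation field $V=u\,\partial_v - v\,\partial_u$. Since $V$ preserves the degree of homogeneous polynomials, I would decompose $f=\sum_d f_d$ into homogeneous parts and treat each $f_d$ separately, with $V f_d=0$. In polar coordinates $V=\partial_\theta$, so a homogeneous polynomial of degree $d$ with $\partial_\theta f_d=0$ must be a constant multiple of $r^d=(u^2+v^2)^{d/2}$; equivalently, writing $f_d$ in the variables $z=u+iv$, $\bar z=u-iv$ and using that a rotation sends $z^p\bar z^q\mapsto e^{i(p-q)\theta}z^p\bar z^q$, invariance forces $p=q$ in every monomial.

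Either way $f_d$ can be a genuine polynomial only when $d$ is even, in which case $f_d=c_d\,(u^2+v^2)^{d/2}$, while the odd-degree parts vanish. Summing gives $f=\sum_k c_{2k}(u^2+v^2)^k=P(u^2+v^2)$ with $P(t)=\sum_k c_{2k}t^k$, which is (c). The only delicate point is the claim that the sole rotation-invariant homogeneous polynomials are the multiples of $(u^2+v^2)^{d/2}$, and the $z,\bar z$ bookkeeping settles this cleanly; the remainder of the argument is routine.
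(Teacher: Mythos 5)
Your proposal is correct and takes essentially the same route as the paper: (a)$\Leftrightarrow$(b) because the zero set of a nonzero polynomial is a proper subvariety of $\bR^2$, (c)$\Rightarrow$(b) by the chain rule, and (b)$\Rightarrow$(c) by translating $a$ to the origin and observing that $\Jac F_a\equiv 0$ forces the angular derivative of $f$ to vanish. The only divergence is in the endgame of (b)$\Rightarrow$(c), where you decompose into rotation-invariant homogeneous pieces and use the $z,\bar z$ bookkeeping, whereas the paper expands $g(r\cos\theta,r\sin\theta)$ in monomials and evaluates at $\theta=\pi/4$ and $\theta=5\pi/4$; your version is a slightly cleaner packaging of the same key fact.
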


\begin{proof}

(a)$\Leftrightarrow$ (b) It follows from the fact that $M_a(f)$ is a proper variety of $\bR^2$ if, and only if $\dim M_a(f) <2 $ (see for instance \cite[page.10]{M1}).  

\smallskip
\noindent
(c)$\Rightarrow$ (b) If $ f = P \circ \rho_a$ for some polynomial in one variable $P$,  then by the chain rule: \[ \Jac F_a (x,y) =P^{\prime}(\rho_a(x,y))\left[(x-a_1)(y-a_2)-(x-a_1)(y-a_2)\right] =0 \] for every $(x,y) \in \bR^2$,  where $P^{\prime}$  denotes the derivative of $P$.   

\smallskip
\noindent
(b)$\Rightarrow$ (c)  Let us assume that $ \Jac F_a = 0$.  Consider the translation $ T : \bR^2 \to \bR^2$, $ T(x,y)= (x-a_1 , y- a_2)$ and the composition map $ F_a \circ T^{-1}   = (f \circ T ^{-1} , \rho_a \circ T^{-1}) = (f \circ T ^{-1} , \rho) $, where $ \rho$ is the square of the euclidean distance to the origin. By the chain rule $   \Jac  (F_{a} \circ T^{-1})  = \Jac (F_a) \Jac  (T^{-1}) = \Jac (F_a) = 0 $. Let us denote $g =f \circ T^{-1} $, and thus $g$ is polynomial.

We use the polar change of coordinate  $g(r , \theta)= g(r\cos \theta, r \sin \theta )$ where $ x=r \cos \theta$, $y=r \sin \theta$ and  by the chain rule we obtain the derivatives:

\[ \begin{cases} 
      g_r= g_x \cos \theta + g_y \sin \theta,\\
      g_\theta = g_x (-r\sin \theta)+ 
       g_y(r \cos \theta). 
   \end{cases}
\]

Since $\Jac (g,\rho) = 0$,  it follows that $g_{\theta}=0$. Then
the function $g( r \cos \theta,r \sin \theta) $ is a polynomial depending only on $r$.

If $g(x,y) = \sum_{i,j} a_{i,j}x^i y^j $, then $g(r\cos\theta,r \sin  \theta)=\sum_{i,j} a_{i,j} r^{i+j} \cos^i\theta \sin^j\theta$. Setting  $\theta= \frac{\pi}{4}$ and $\theta = \frac{5\pi}{4}$ we obtain that  $a_{i,j} =0$ for $i+j$ odd, and that in 
  $g(r  \cos \theta,r \sin \theta) =  \sum_{k}  b_k(\theta) r^{2k}$ the coefficients $b_k$ are independent of $\theta$.
  
 Then we get
\begin{equation*}
\begin{split}
g(x, y) = \sum_{k}  b_k \left(\sqrt{ x^2 + y^2}\right)^{2k}   =\sum_{k}  b_k ( x^2 + y^2)^k.  
\end{split}
\end{equation*}Hence $ g = P \circ \rho$ where $ P(t) :=\sum_{k}  b_k t^k $. Therefore   $f = P \circ \rho \circ T = P \circ \rho_{a}$ and this ends the proof.
\end{proof}

The following corollary characterize the primitiveness of a polynomial function with the dimension of its Milnor set.  Its proof follows from Lemma \ref{Lem:DimM(F)=1} and Proposition \ref{Prop:StrenDTLemm}.

\begin{corollary}\label{cor:PrimitEquivDimM(f)1}
Let $ f:\bR^2 \to \bR$ be a polynomial function. Then $ f$ is primitive with respect to $ \rho$ if and only if $ \normalfont\text{dim}M(f)=1$.
\end{corollary}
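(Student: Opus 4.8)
The plan is to deduce this corollary purely formally from the two results just established, by specializing them to the center $a=(0,0)$, where $\rho_a = \rho$ and $M_a(f) = M(f)$. The guiding observation is that Lemma \ref{Lem:DimM(F)=1} already supplies the equivalence between non-primitiveness relative to $\rho$ and the maximal dimension $\dim M(f) = 2$; all that genuinely remains is to exclude the degenerate possibility that a primitive $f$ might produce a zero-dimensional (finite or empty) Milnor set. So the proof splits into the two implications, each assembled from one half of Lemma \ref{Lem:DimM(F)=1} together with the unboundedness supplied by Proposition \ref{Prop:StrenDTLemm}.

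For the direction ``$f$ primitive $\Rightarrow \dim M(f) = 1$'', I would first invoke the equivalence (a)$\Leftrightarrow$(c) of Lemma \ref{Lem:DimM(F)=1}: primitiveness relative to $\rho$ forces $\dim M(f) \neq 2$, hence $\dim M(f) \le 1$. To obtain the reverse inequality I would apply Proposition \ref{Prop:StrenDTLemm}(a), which guarantees that $M(f) = M_0(f)$ is unbounded, hence nonempty and infinite, so $\dim M(f) \ge 1$; combining the two bounds yields $\dim M(f) = 1$. Alternatively, one may read off the same conclusion in a single stroke from Proposition \ref{Prop:StrenDTLemm}(c) with $b = (0,0)$, which directly gives $\dim(M(f) \setminus \Sing f) = 1$ and therefore $\dim M(f) \ge 1$. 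The converse ``$\dim M(f) = 1 \Rightarrow f$ primitive'' is then immediate: $\dim M(f) = 1$ means in particular $\dim M(f) \neq 2$, and the contrapositive of (c)$\Rightarrow$(a) in Lemma \ref{Lem:DimM(F)=1} shows that $f$ is primitive relative to $\rho$.

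The only point requiring care — and the single, mild obstacle — is ensuring that the primitive case cannot collapse to a zero-dimensional Milnor set, since Lemma \ref{Lem:DimM(F)=1} alone only separates $\dim = 2$ from $\dim < 2$. This gap is precisely what the unboundedness in Proposition \ref{Prop:StrenDTLemm}(a) closes, via the fact that the restriction of $f$ to any sufficiently large circle centred at the origin attains a maximum and a minimum, both of which lie in $M(f)$. Beyond invoking this, the argument is a formal rearrangement of already-proven equivalences, so I expect no new computation to be needed.
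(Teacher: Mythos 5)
Your proposal is correct and follows exactly the route the paper indicates: the paper itself only remarks that the corollary ``follows from Lemma \ref{Lem:DimM(F)=1} and Proposition \ref{Prop:StrenDTLemm}'', and your argument is precisely the intended assembly — Lemma \ref{Lem:DimM(F)=1} to exclude dimension $2$ under primitiveness (and conversely), plus the unboundedness from Proposition \ref{Prop:StrenDTLemm} to rule out a zero-dimensional Milnor set. Nothing is missing.
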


\

\subsection{Compact components of $M(f)$ and radius at infinity}
 In this section we find effectively the radius of a disk centred in origin such that $M(f)$ outside this disk is a union of finitely many connected components of dimension 1. It follows by Proposition \ref{Prop:StrenDTLemm} that for all $a \in \bR^2$, except for at most one point in $\bR^2$,  the set $ M_a(f)$ is a 1 dimensional real curve.  Then,  after an appropriate translation of coordinates we may and we will always assume that $f$ is primitive with respect to $\rho= x^2+y^2$.  

We denote by $ M(f)_{\normalfont\text{red}}$ the reduced structure of the curve $ M(f)=\{ \text{Jac}(f,\rho)=0 \}$, i.e.  $ M(f)_{\text{red}}$ is the zero locus $\{ g= g_1\cdots g_s=0\}$, where $g_1 , \ldots , g_s$ are all the polynomial irreducible factors in the decomposition of $ \text{Jac}(f,\rho)$ with $ g_i \neq g_j$ when $i \neq j$. 
\begin{definition}\label{Def: MM }
Let $ f : \bR^2 \to \bR $ be a polynomial function. We define the set
$$ \mu(M(f)):= \{ p \in M(f)\text{  } | \text{  } \rho \not\kern-0.3em\cap\kern-0.5em|\kern0.7em\kern-0.5em_{p} M( f)_{\normalfont\text{red}} \}.$$
\end{definition}
By definition, $\mu(M(f))$ is a real algebraic set, since it is defined by the equations $g (x,y) =0$ and $ y g_{x}(x,y)-x g_y (x,y)=0  $, where $g$ is the reduced polynomial defining $ M(f)_{\text{red}}$ and $ g_x, g_y$ denote its  partial derivatives with respect to the variables $x$ and $y$, respectively.

\medskip

\begin{proposition}\label{Prop:MMCompact}
Let $f:\bR^2 \to \bR$ be a primitive polynomial function. Then the set $ \mu(M(f))$ is the union of finitely many points and circles centred at the origin. In particular, $ \mu(M(f))$ is compact.
\end{proposition}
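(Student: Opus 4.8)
The plan is to analyse $\mu(M(f))$ factor by factor along the irreducible decomposition $g=g_1\cdots g_s$ of the reduced equation of $M(f)$. As noted immediately after the definition of $\mu(M(f))$, the set is cut out by $g=0$ and $yg_x-xg_y=0$; I would first recognise the second equation as the vanishing of the \emph{angular derivative}, i.e. $xg_y-yg_x=Dg$, where $D:=x\partial_y-y\partial_x$ is the infinitesimal rotation (this is exactly the operator producing $g_\theta$ in the proof of Lemma \ref{Lem:DimM(F)=1}). Since $f$ is primitive, Corollary \ref{cor:PrimitEquivDimM(f)1} gives $\dim M(f)=1$, so $g$ is a nonzero polynomial and $\mu(M(f))\subseteq\{g=0\}$ has dimension at most $1$. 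It then suffices to describe, for each irreducible factor $g_i$, the piece $C_i\cap\mu(M(f))$, where $C_i:=\{g_i=0\}$, and to show each such piece is either an entire circle centred at the origin or a finite set.

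For the dichotomy I would use the Leibniz rule $Dg=\sum_k (\prod_{j\neq k}g_j)\,Dg_k$: on $C_i$ every term with $k\neq i$ carries the factor $g_i$ and vanishes, so $Dg|_{C_i}=\big(\prod_{j\neq i}g_j\big)\,(Dg_i)|_{C_i}$. If $g_i\nmid Dg_i$, then $g_i$ and $Dg_i$ are coprime (as $g_i$ is irreducible), and $g_i,g_j$ are coprime for $j\neq i$, so $C_i\cap\{Dg=0\}\subseteq\{g_i=0\}\cap\{(\prod_{j\neq i}g_j)\,Dg_i=0\}$ is a finite intersection of coprime curves, hence finite. If instead $g_i\mid Dg_i$, then $Dg\equiv 0$ on $C_i$ and hence $C_i\subseteq\mu(M(f))$.

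The heart of the proof — and the step I expect to be the main obstacle — is to show that $g_i\mid Dg_i$ forces $C_i$ to be a circle centred at the origin. Here I would first note that $D$ preserves total degree, so $\deg Dg_i\le\deg g_i$ and $g_i\mid Dg_i$ forces $Dg_i=c\,g_i$ for a constant $c$. I would then pass to $z=x+iy$, where a direct computation gives $D=i(z\partial_z-\bar z\partial_{\bar z})$, so that $D(z^a\bar z^b)=i(a-b)z^a\bar z^b$. Writing $g_i=\sum_{a,b}\gamma_{ab}z^a\bar z^b$, the eigen-equation $Dg_i=c\,g_i$ means $i(a-b)=c$ whenever $\gamma_{ab}\neq 0$; since $g_i$ is real we have $\gamma_{ba}=\overline{\gamma_{ab}}$, and pairing $(a,b)$ with $(b,a)$ forces $c=0$ and $a=b$ throughout. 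Hence $g_i$ is a polynomial in $z\bar z=\rho$, and irreducibility leaves only $g_i=\rho-c_0$; its real zero locus is a circle centred at the origin when $c_0>0$ (and the single point $\{0\}$ or $\varnothing$ otherwise, which I absorb into the finite part).

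Combining the two cases over the finitely many irreducible factors, $\mu(M(f))$ equals the union of those circles $C_i$ with $g_i\mid Dg_i$, together with a finite set of points collected from the factors with $g_i\nmid Dg_i$, the pairwise intersections $C_i\cap C_j$ for $i\neq j$, and the degenerate cases $c_0\le 0$. This exhibits $\mu(M(f))$ as a finite union of circles centred at the origin and isolated points; being a bounded closed subset of $\bR^2$, it is compact, which is the claim.
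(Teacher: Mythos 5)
Your argument is correct, but it follows a genuinely different route from the paper's. The paper's proof is ``soft'': it puts a semialgebraic Whitney stratification on $M(f)_{\mathrm{red}}$, invokes the Tarski--Seidenberg principle to conclude that $\rho|_{M(f)_{\mathrm{red}}}$ has only finitely many critical values, deduces that $\mu(M(f))$ sits inside finitely many circles centred at the origin, and then applies B\'ezout to each circle to get the point/circle dichotomy. You instead work directly with the defining equations $g=0$, $Dg=0$ (with $D=x\partial_y-y\partial_x$) and run an explicit factor-by-factor analysis: the Leibniz expansion of $Dg$ on each $C_i=\{g_i=0\}$, coprimality plus B\'ezout in the case $g_i\nmid Dg_i$, and a clean eigenvalue computation for $D$ in the coordinates $z,\bar z$ showing that $g_i\mid Dg_i$ forces $g_i$ to be a polynomial in $\rho$. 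Your approach buys more than the paper's: it is constructive (it identifies the circles of $\mu(M(f))$ exactly with the rotation-invariant irreducible factors of $g$, which is precisely what Proposition \ref{Prop:DetecCircleInMM} and the algorithm of \S\ref{subsec:MilnorRadius} exploit), whereas the paper's stratification argument is shorter but purely existential. One small inaccuracy in your case analysis: an irreducible real factor that is a polynomial in $\rho$ need not be linear in $\rho$; e.g.\ $\rho^2+b\rho+c$ with $b^2<4c$ is irreducible in $\bR[x,y]$ (its complex factors $\rho-\alpha$, $\rho-\bar\alpha$ are nondegenerate conics interchanged by conjugation). This does not damage the conclusion, since such a factor has empty real zero locus and can be absorbed into your degenerate cases, but the statement ``irreducibility leaves only $g_i=\rho-c_0$'' should be weakened to ``$g_i$ is an irreducible polynomial in $\rho$, whose real zero set is a circle centred at the origin, a point, or empty.''
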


\begin{proof}

Let us consider a semialgebraic Whitney stratification on $M(f)_{\normalfont\text{red}}$. By the Tarski-Seidenberg principle, all the levels of the distance function $ \rho $, except for finitely many, are transversal to the strata of $M(f)_{\normalfont\text{red}}$.  Hence the restriction $ \rho_{|M(f)_{\normalfont\text{red}}}$ has finitely many critical values and thus  $\mu(M(f)) \subset \cup_{i=1}^{s}C_{\lambda_i}$, where $C_{\lambda_i}$ denotes a circle centred at the origin of radius $\lambda_i$.  By B\'{e}zout Theorem one concludes that $ \mu(M(f)) $ is a union of finitely many points and circles centered at the origin.
\end{proof}

\begin{remark}\label{r:M(f)-unbo-outside}Due to Proposition \ref{Prop:MMCompact},  the set $\mu(M(f))$ is contained in a disk centred at the origin with radius large enough.  The restriction of $\rho$ to a compact component of $M(f)$ has a point of maximum and, by definition, this point is in $\mu(M(f))$.  Altogether this implies that $M(f)$ outside a disk that contains $\mu(M(f))$ is the union of unbounded components.  
\end{remark}

Next proposition gives a better description of the circles in $\mu(M(f))$, see  Proposition \ref{Prop:MMCompact}. 

\begin{proposition}\label{Prop:DetecCircleInMM}
Let $f: \bR^2 \to \bR$ be a polynomial. If $\mu(M(f))$ contains a circle $ C_r =\{ x^2+y^2 -r^2 =0\}$ of radius $r>0$ centred at the origin, then:

\begin{enumerate}
\rm \item \it$ C_{r} \subset M(f) $,
\rm \item \it $C_{r}$ is contained in a single fibre of $f$,
\rm \item \it There exists $\lambda \in \bR$ such that $f(x,y)-\lambda = (x^2+y^2-r^2)h(x,y)$ and thus $M(f) = C_{r} \cup M(h)$.
\end{enumerate}
\end{proposition}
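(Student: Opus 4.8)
The plan is to handle the three items in turn: (a) is immediate from the definition of $\mu(M(f))$, (b) is a tangency computation on the circle, and (c) combines a divisibility argument with a factorisation of the Jacobian.

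First, for (a) I would simply note that by Definition \ref{Def: MM } one has $\mu(M(f)) \subseteq M(f)$, so the hypothesis $C_r \subseteq \mu(M(f))$ forces $C_r \subseteq M(f)$. For (b) the idea is to read the vanishing of the Jacobian on $C_r$ as the vanishing of the tangential derivative of $f$ along the circle. Since $C_r \subseteq M(f)$, at every point of $C_r$ we have $\Jac(f,\rho) = f_x\rho_y - f_y\rho_x = 2(yf_x - xf_y) = 0$. Parametrising $C_r$ by $\gamma(\theta) = (r\cos\theta, r\sin\theta)$, the chain rule gives $(f\circ\gamma)'(\theta) = -\,(yf_x - xf_y)|_{\gamma(\theta)} = -\tfrac12\Jac(f,\rho)|_{\gamma(\theta)} = 0$, so $f\circ\gamma$ is constant. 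Hence $f$ takes a single value $\lambda$ on $C_r$, which is exactly (b).

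For (c) I would first establish the factorisation $f - \lambda = (x^2+y^2-r^2)h$. By (b) the polynomial $f - \lambda$ vanishes on the entire real curve $C_r$. The cleanest elementary route is division: viewing $f-\lambda$ as a polynomial in $y$ and dividing by $y^2 + (x^2-r^2)$, which is monic of degree $2$ in $y$, yields $f - \lambda = (x^2+y^2-r^2)\,h(x,y) + a(x)y + b(x)$ with $a,b \in \bR[x]$. Evaluating the remainder at the two points $(x,\pm\sqrt{r^2-x^2}) \in C_r$ for each $|x| < r$ forces $a(x) = b(x) = 0$ for infinitely many $x$, whence $a \equiv b \equiv 0$ and the remainder is zero. (Alternatively, since $x^2+y^2-r^2$ is irreducible over $\bR$ and its real points $C_r$ are Zariski dense in its complexification, one may invoke the Nullstellensatz.) I expect this divisibility step to be the only genuinely non-formal point: vanishing on a real zero set does not force divisibility in general, and it works here precisely because $C_r$ is one-dimensional and $x^2+y^2-r^2$ is irreducible.

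Finally, to identify $M(f)$ I would compute the Jacobian of the factored form. Writing $u := \rho - r^2 = x^2+y^2-r^2$, so that $f - \lambda = u\,h$, and using $\Jac(f,\rho) = \Jac(f-\lambda,\rho)$ together with the Leibniz rule, I get $\Jac(f-\lambda,\rho) = h\,\Jac(u,\rho) + u\,\Jac(h,\rho)$. Since $u_x = \rho_x$ and $u_y = \rho_y$ we have $\Jac(u,\rho) = 0$, hence $\Jac(f,\rho) = (x^2+y^2-r^2)\,\Jac(h,\rho)$. The zero locus of the right-hand side is $C_r \cup M(h)$, which gives $M(f) = C_r \cup M(h)$ and completes (c).
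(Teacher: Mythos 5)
Your proof is correct and follows essentially the same route as the paper's: (a) is the inclusion $\mu(M(f))\subset M(f)$, (b) is the observation that the vanishing of $\Jac(f,\rho)$ on $C_r$ kills the tangential derivative of $f$ along the circle (the paper phrases this as $\nabla f$ being radial on $C_r$, which is the same computation), and (c) factors $f-\lambda$ and then factors the Jacobian. The one substantive difference is how you justify the divisibility $(x^2+y^2-r^2)\mid (f-\lambda)$: the paper invokes \cite[Theorem 4.5.1]{BCR}, using that $x^2+y^2-r^2$ is irreducible over $\bR$ and that its real zero set is one-dimensional, whereas you give a self-contained Euclidean division in $\bR[x][y]$ by the monic (in $y$) polynomial $y^2+(x^2-r^2)$ and kill the linear remainder $a(x)y+b(x)$ by evaluating at the two points $(x,\pm\sqrt{r^2-x^2})$ for $|x|<r$. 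Your version is more elementary and correctly isolates why vanishing on the real zero set suffices here (it would fail for, say, $x^2+y^2$); the paper's citation buys brevity and generality. The final identity $\Jac(f,\rho)=(x^2+y^2-r^2)\,\Jac(h,\rho)$ via the Leibniz rule and $\Jac(\rho-r^2,\rho)=0$ is exactly the paper's expanded computation, so (c) closes the same way.
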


\begin{proof}

(a). Follows straightforward from the inclusion $ \mu(M(f)) \subset M(f)$.  

\smallskip
\noindent(b). Let $\alpha_r : \left[0, 2\pi \right] \to C_{r} $ be the parametric equation of $C_{r} $.  By (a), the circle $ C_{r} \subset M(f) $ and thus the gradient vector $\nabla f(\alpha_r(t)) $ is a multiple scalar of $ \alpha_r(t)$. Hence $\left\langle \nabla f(\alpha_r(t)), \alpha_{r}^{\prime}(t) \right\rangle=0 $ for every $ t \in \left[0, 2\pi \right] $.  This proves that the restriction $ f_{|C_{r}} $ is constant,  equivalently, $ C_{r}$ is contained in a single fibre of $f$.

\smallskip
\noindent(c).  It follows from (b) that there exists $\lambda \in \bR$ such that the restriction $( f-\lambda)_{|C_r} =0$. Since $ x^2+y^2-r^2$ is an irreducible real polynomial in two variables, and $C_r$ is a one-dimensional algebraic set,  it follows by  \cite[Theorem 4.5.1]{BCR} that $ f-\lambda = (x^2+y^2-r^2)h(x,y)$, for some real polynomial $ h $ in two variables.

On the other hand, $M(f)$ is defined by the zero-set of the polynomial function

\[
\begin{split} 
 \text{Jac}(f,\rho) = & x(2yh(x,y) + (x^2 + y^2 -r^2)h_{y}(x,y))\\
                                    &-y(2xh(x,y)+(x^2+y^2-r^2)h_x(x,y))\\
                                  =  & (x^2+y^2-r^2)(xh_y(x,y)-yh_{x}(x,y)).
                             \end{split}
\]This proves that $ M(f) = C_{r} \cup M(h)$. 
\end{proof}

From Proposition \ref{Prop:DetecCircleInMM} we have:

\begin{theorem}\label{theo:charcCompCompo}
Let $ f: \bR^2 \to \bR $ be a  polynomial function and let $ C_r = \{ x^2+y^2-r^2=0 \}$. The following conditions are equivalent: 
\begin{enumerate}
\rm\item \it $C_{r}$ is contained in $M(f)$,
\rm\item\it there exists $\lambda \in \bR$ such that $f(x,y)= (x^2+y^2-r^2)h(x,y) +\lambda$, 
\rm\item\it  $ C_r$ is contained in a connected component of the fibre of $f$.
 \end{enumerate}  
\end{theorem}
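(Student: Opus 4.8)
The plan is to prove the cyclic chain of implications $(1)\Rightarrow(2)\Rightarrow(3)\Rightarrow(1)$, organizing the whole argument around the single observation that all three conditions are equivalent to the restriction $f_{|C_r}$ being constant. The computational backbone is the same identity already exploited in the proof of Proposition \ref{Prop:DetecCircleInMM}: parametrizing $C_r$ by $\alpha_r(t)=(r\cos t, r\sin t)$, so that $\alpha_r'(t)=(-r\sin t, r\cos t)=(-y,x)$, one obtains
\[
\frac{d}{dt}\,f(\alpha_r(t)) = \langle \nabla f(\alpha_r(t)),\, \alpha_r'(t)\rangle = \bigl(x f_y - y f_x\bigr)(\alpha_r(t)) = \Jac(f,\rho)(\alpha_r(t)).
\]
Hence $\Jac(f,\rho)$ vanishes identically on $C_r$ precisely when the angular derivative of $f$ along $C_r$ is zero, and since $C_r$ is connected this happens exactly when $f_{|C_r}$ is constant. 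This one identity delivers both halves of the equivalence between $(1)$ and ``$f_{|C_r}$ constant''.

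For $(1)\Rightarrow(2)$: assuming $C_r\subset M(f)=\{\Jac(f,\rho)=0\}$, the identity above forces $f_{|C_r}\equiv\lambda$ for some $\lambda\in\bR$, so $(f-\lambda)_{|C_r}=0$. Since $x^2+y^2-r^2$ is an irreducible real polynomial whose zero set is the one-dimensional algebraic set $C_r$, I would invoke \cite[Theorem 4.5.1]{BCR} exactly as in Proposition \ref{Prop:DetecCircleInMM}(c) to conclude that $x^2+y^2-r^2$ divides $f-\lambda$ in $\bR[x,y]$, yielding $f=(x^2+y^2-r^2)h+\lambda$ for some polynomial $h$, which is condition $(2)$.

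For $(2)\Rightarrow(3)$: substituting the points of $C_r$ into $f=(x^2+y^2-r^2)h+\lambda$ gives $f_{|C_r}\equiv\lambda$, so $C_r\subset f^{-1}(\lambda)$; as $C_r$ is connected it lies inside a single connected component of this fibre, which is $(3)$. For $(3)\Rightarrow(1)$: if $C_r$ is contained in a connected component of some fibre $f^{-1}(\lambda)$, then in particular $f_{|C_r}\equiv\lambda$ is constant, so the angular-derivative identity gives $\Jac(f,\rho)\equiv 0$ along $C_r$, i.e. $C_r\subset M(f)$, which is $(1)$.

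I expect the only genuine difficulty to be the divisibility step inside $(1)\Rightarrow(2)$: passing from the geometric fact that $f-\lambda$ vanishes on $C_r$ to the algebraic fact that the defining polynomial $x^2+y^2-r^2$ divides $f-\lambda$ is not formal over $\bR$ and truly relies on the real Nullstellensatz in the form of \cite[Theorem 4.5.1]{BCR}, together with the irreducibility of $x^2+y^2-r^2$ and the fact that $C_r$ has real dimension one. Everything else reduces to the elementary angular-derivative identity and the connectedness of $C_r$; in particular, no primitivity hypothesis on $f$ is required for this statement.
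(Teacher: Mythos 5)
Your proof is correct and follows essentially the same route as the paper, which deduces the theorem from Proposition \ref{Prop:DetecCircleInMM}: the implication $(1)\Rightarrow(2)$ is exactly the argument of parts (b) and (c) of that proposition (the angular-derivative identity plus the divisibility step via \cite[Theorem 4.5.1]{BCR}), and $(2)\Rightarrow(3)\Rightarrow(1)$ are the easy converses. Your write-up has the small merit of making explicit the direction $(3)\Rightarrow(1)$, which the paper leaves implicit; note only that, as in Proposition \ref{Prop:DetecCircleInMM}, one must take $r>0$ for the divisibility step to work.
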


By Proposition \ref{Prop:MMCompact} the set $\mu(M(f))$ is a compact set. Then there is a non-empty subset $ \Lambda \subset \bR_{>0}  $ of positive real numbers $ r>0$  such that $ \mu(M(f)) $ is contained in the interior of the open disk $ D_r$ centred at the origin of radius $r$.  Let us fix $ R_0 := \inf \{r \text{ } | \text{ } r \in \Lambda\} $  and we call $ R_0$  the \textit{Milnor radius at infinity of} $ f$.

\begin{remark}\label{re:}
For a primitive polynomial $f$, the set  $M(f) \m \overline{D}_{R_0}$ is a disjoint union of finitely many 1-dimensional manifolds.
\end{remark}

%(a). Since $f $ is primitive,  its Milnor set is a non-empty smooth curve in the complement of the disk $\overline{D}_{R_0}$. Hence,  by Proposition \ref{Prop:StrenDTLemm} the result follows. 

\section{ Milnor arcs at infinity}\label{Milnor arcs at infinity}
   
In this section we will describe the behavior of the Milnor set of a primitive polynomial function outside a disk $D_{R_0}$ centred at the origin of radius equals the Milnor radius at infinity of $f$. %Let us denote by $\text{Sing}M(f)_{\text{red}} $ the singular set of the reduced curve $M(f)_{\text{red}}$ (see Definition \ref{Def: MM }). 
   
\begin{definition}\label{Def:MilArcInft}
Let $ f: \bR^2 \to \bR $ be a primitive polynomial function. A connected component  $ \gamma$ of $ M(f) \setminus \overline{D}_{R_0} $ will be called a \textit{Milnor arc at infinity of $f$}  and we denote by  $ \mathfrak{M}_{\text{arc}}(f)$ the set of Milnor arcs at infinity of $ f$.
\end{definition}

\begin{proposition} \label{Prop:RLargEnough}
Let $ f : \bR^2 \to\bR$ be a primitive  polynomial function and let $ \gamma $ be a Milnor arc at infinity $f$.  Then
\begin{enumerate}
\rm \item \it the function $\rho$ restricted to $\gamma$ is strictly increasing when $\gamma$ tends to infinity, and

\rm \item \it  if $ \gamma \not\subset  \normalfont\Sing f  $, then the restriction $f_{|\gamma}$ is either strictly increasing, or strictly decreasing.      
\end{enumerate}
\end{proposition}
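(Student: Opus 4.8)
The plan is to deduce both parts from the single fact that, outside $\overline{D}_{R_0}$, the circles (the level sets of $\rho$) meet $M(f)$ transversally. Two elementary identities drive everything. First, $M(f)=\{\Jac(f,\rho)=0\}=\{yf_x-xf_y=0\}$ is precisely the locus where $\nabla f$ and $\nabla\rho$ are linearly dependent. Second, a point $p$ of the (smooth, by Remark \ref{re:}) curve $M(f)_{\mathrm{red}}$ lies in $\mu(M(f))$ exactly when the circle through $p$ is tangent to $M(f)_{\mathrm{red}}$ there, that is, when $p$ is a critical point of $\rho$ restricted to $M(f)_{\mathrm{red}}$.

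For part (a), I would note that by the definition of $R_0$ we have $\mu(M(f))\subset\overline{D}_{R_0}$, so $\gamma$ contains no point of $\mu(M(f))$; equivalently $\rho_{|M(f)_{\mathrm{red}}}$ has no critical point along $\gamma$. By Remark \ref{re:}, $\gamma$ is a connected smooth $1$-manifold, and it cannot be a circle, since a compact component would give $\rho_{|\gamma}$ a maximum and hence a critical point. Thus $\gamma$ is diffeomorphic to an open interval and $\rho_{|\gamma}$, having nowhere-vanishing derivative on a connected domain, is strictly monotone. As $\gamma$ is unbounded (Remark \ref{r:M(f)-unbo-outside}) and $\rho$ is proper, $\rho\to+\infty$ along the end of $\gamma$, so $\rho_{|\gamma}$ is strictly increasing as $\gamma$ tends to infinity.

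For part (b), the first task is to locate the critical points of $f_{|\gamma}$. A point $p\in\gamma$ is critical for $f_{|\gamma}$ if and only if $\nabla f(p)$ is orthogonal to $T_p\gamma$ or $\nabla f(p)=0$. Since $p\in M(f)$, whenever $\nabla f(p)\neq 0$ the vectors $\nabla f(p)$ and $\nabla\rho(p)$ are parallel; together with orthogonality to $T_p\gamma$ this forces $\nabla\rho(p)\perp T_p\gamma$, i.e. $p$ is a critical point of $\rho_{|\gamma}$, contradicting part (a). Hence every critical point of $f_{|\gamma}$ belongs to $\Sing f\cap\gamma$. Away from $\Sing f$ one may write $\nabla f=c\,\nabla\rho$ on $\gamma$ with $c=(xf_x+yf_y)/(2\rho)$, a continuous non-vanishing function; since $\gamma$ is connected, $c$ keeps a constant sign, and from $(f_{|\gamma})'=c\,(\rho_{|\gamma})'$ together with $(\rho_{|\gamma})'>0$ from part (a) I conclude that $f_{|\gamma}$ is strictly monotone on $\gamma\setminus\Sing f$, the direction of monotonicity being fixed by the sign of $c$.

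The main obstacle is the interaction with $\Sing f$. The hypothesis gives only $\gamma\not\subset\Sing f$, so a priori $\Sing f\cap\gamma$ is a nonempty finite set at which $(f_{|\gamma})'$ vanishes and could change sign, breaking strict monotonicity. To finish I would try to show that no such sign-changing critical point occurs: $\gamma$ lies in the smooth locus of $M(f)_{\mathrm{red}}$, hence in a single irreducible component $V$, and $\gamma\not\subset\Sing f$ forces $V\not\subset\Sing f$, making $\Sing f\cap V$—and thus $\Sing f\cap\gamma$—finite; the delicate remaining point is to rule out that such an isolated singular point of $f$ is a local extremum of $f_{|\gamma}$, which I expect to need the constant tangency type of the fibres along $\gamma$ established in the next section (or else a passage to the unbounded end of $\gamma$). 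Reconciling isolated critical points of $f$ on $\gamma$ with global strict monotonicity is, in my view, exactly where the real difficulty lies.
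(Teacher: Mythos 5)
Your part (a) is exactly the paper's argument, which is dispatched in one line (``follows from Proposition \ref{Prop:MMCompact} and Definition \ref{Def:MilArcInft}''): the critical points of $\rho_{|M(f)_{\mathrm{red}}}$ lie in $\mu(M(f))\subset \overline{D}_{R_0}$, so $\gamma$ is a smooth interval on which $(\rho_{|\gamma})'$ never vanishes, and unboundedness forces the monotonicity to be increasing. For part (b), away from $\Sing f$ your computation is again the paper's proof in different packaging: the paper supposes $f_{|\gamma}$ is not monotone, takes a local extremum $t_0$, gets $\langle \nabla f(\alpha(t_0)),\alpha'(t_0)\rangle=0$, writes $\nabla f(\alpha(t_0))=\beta\,\alpha(t_0)$ with $\beta\neq 0$, and concludes $\langle\alpha(t_0),\alpha'(t_0)\rangle=0$, i.e.\ $\alpha(t_0)\in\mu(M(f))$, contradicting (a). Your identity $(f_{|\gamma})'=c\,(\rho_{|\gamma})'$ with $c=(xf_x+yf_y)/(2\rho)$ nonvanishing is the same observation (note only that ``$c$ keeps a constant sign because $\gamma$ is connected'' is literally valid only on each connected component of $\gamma\setminus\Sing f$, which you do acknowledge afterwards).

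The point you isolate as the real difficulty --- an isolated point of $\Sing f$ lying on $\gamma$, where $(f_{|\gamma})'$ vanishes and could a priori change sign --- is indeed the only case not covered; your remark that $\gamma$ sits in the smooth locus of $M(f)_{\mathrm{red}}$ correctly disposes of the one-dimensional components of $\Sing f$, since such a component meeting $\gamma$ would locally coincide with it and hence contain it, contradicting $\gamma\not\subset\Sing f$. But you should know that the paper does not close this case either: the step ``since $\gamma\not\subset\Sing f$, there exists a non-zero $\beta$ with $\nabla f(\alpha(t_0))=\beta\,\alpha(t_0)$'' silently assumes $\alpha(t_0)\notin\Sing f$, and the assertion made just after the proposition that the Milnor arcs do not intersect $\Sing f$ is attributed to Definition \ref{Def:MilArcInft}, which says nothing of the sort. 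So your proposal reproduces the paper's proof and flags its one soft spot without repairing it; a complete argument would require showing either that an isolated critical point of $f$ outside $\overline{D}_{R_0}$ cannot lie on a Milnor arc, or that the sign of $xf_x+yf_y$ cannot change across such a point along $\gamma$ --- neither of which is supplied by the text.
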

\begin{proof}
\noindent
(a).  It follows straightforward from Proposition \ref{Prop:MMCompact} and Definition \ref{Def:MilArcInft}.
%Let $ q \in M(f)_{\text{red}}  $ such that the restriction $\rho_{|M(f)_{\text{red}}}$ has a extrema value at $q$, hence $ q \in \mu(M(f))$ which is a compact set by Proposition \ref{Prop:MMCompact}. Moreover, $\mu (M(f)) \subset \overline{D}_{R_0}$, and thus    Therefore for every connected component  $\gamma $ of $ M(f)\setminus \overline{D}_{R_0}$ one has that $ \rho_{|\gamma}$ is strictly increasing.
 
\smallskip
\noindent(b).  Let  $ \alpha : \left]R_0,\infty \right[ \to \bR^2$ be a parametrization of $ \gamma  $, where $ R_0$ denotes the Milnor radius at infinity of $f$. By contradiction,  let us suppose that $ f_{|\gamma} $ is not monotone.  Hence there exists $ t_0 \in \left]  R_0,\infty \right[$ such that $f(\alpha(t_0))$ is a local extrema and thus $ \left\langle \nabla f(\alpha(t_0)) , \alpha^{\prime} (t_0)  \right\rangle = 0$ holds. On the other hand, since $ \gamma \not\subset  \text{Sing}f $, there exists  a non-zero $ \beta \in \bR $ such that $ \nabla f(\alpha(t_0)) = \beta \alpha(t_0)$,  and thus $ \left\langle \alpha(t_0), \alpha^{\prime}(t_0)  \right\rangle = 0 $.  Therefore $ \alpha(t_0) \in \mu(M(f)) $ which is a contradiction with Definition \ref{Def:MilArcInft}.
\end{proof}
%%%%

\medskip

By definition, the Milnor set $M(f)$ is the set of points where the fibres of $f$ are not transverse to the level sets of the Euclidean distance function $\rho$,  and the Milnor arcs do not intersect $\Sing f$, see Definition \ref{Def:MilArcInft}.  For any point $q$ of a Milnor arc $\gamma$,  the fibre of $f$ passing through $q$ may be either \begin{enumerate}
\rm\item locally inside the disk $D$,
\rm\item locally outside $D$,
\rm\item a local half-branch inside $D$ and the other local half-branch outside $D$,
\end{enumerate}where $D= \{\rho(x,y)  \le \|q\| \}$ is the disk centred a the origin of radius $\|q\|$
 
We say that the fiber of $f$ at $q$ has a: $\rho$-\textit{maximum} type of tangency if situation (a) holds,  or $\rho$-\textit{minimum} type of tangency if situation (b) holds,  or $\rho$-\textit{inflectional} type of tangency if situation (c) holds.

\tikzset{every picture/.style={line width=0.75pt}} %set default line width to 0.75pt        

\begin{tikzpicture}[x=0.75pt,y=0.75pt,yscale=-1,xscale=1]
%uncomment if require: \path (0,300); %set diagram left start at 0, and has height of 300

%Shape: Free Drawing [id:dp011919892000949117] 
\draw  [color={rgb, 255:red, 0; green, 0; blue, 0 }  ,draw opacity=1 ][line width=0.75] [line join = round][line cap = round] (270.5,230) .. controls (270.5,230) and (270.5,230) .. (270.5,230) ;
%Flowchart: Connector [id:dp4331665768004267] 
\draw   (74,155.75) .. controls (74,122.2) and (102.88,95) .. (138.5,95) .. controls (174.12,95) and (203,122.2) .. (203,155.75) .. controls (203,189.3) and (174.12,216.5) .. (138.5,216.5) .. controls (102.88,216.5) and (74,189.3) .. (74,155.75) -- cycle ;
%Flowchart: Connector [id:dp31182741198279307] 
\draw   (294,153.75) .. controls (294,120.2) and (322.88,93) .. (358.5,93) .. controls (394.12,93) and (423,120.2) .. (423,153.75) .. controls (423,187.3) and (394.12,214.5) .. (358.5,214.5) .. controls (322.88,214.5) and (294,187.3) .. (294,153.75) -- cycle ;
%Flowchart: Connector [id:dp3748485687297005] 
\draw   (499,157.75) .. controls (499,124.2) and (527.88,97) .. (563.5,97) .. controls (599.12,97) and (628,124.2) .. (628,157.75) .. controls (628,191.3) and (599.12,218.5) .. (563.5,218.5) .. controls (527.88,218.5) and (499,191.3) .. (499,157.75) -- cycle ;
%Curve Lines [id:da11399310455248224] 
\draw [color={rgb, 255:red, 208; green, 12; blue, 12 }  ,draw opacity=1 ]   (45,231.5) .. controls (144,157.5) and (78,-24.5) .. (231,217.5) ;
%Shape: Circle [id:dp4928138367233861] 
\draw  [color={rgb, 255:red, 16; green, 232; blue, 14 }  ,draw opacity=1 ][dash pattern={on 4.5pt off 4.5pt}][line width=0.75]  (104,97) .. controls (104,83.19) and (115.19,72) .. (129,72) .. controls (142.81,72) and (154,83.19) .. (154,97) .. controls (154,110.81) and (142.81,122) .. (129,122) .. controls (115.19,122) and (104,110.81) .. (104,97) -- cycle ;
%Shape: Circle [id:dp7481327505498703] 
\draw  [color={rgb, 255:red, 16; green, 232; blue, 14 }  ,draw opacity=1 ][dash pattern={on 4.5pt off 4.5pt}][line width=0.75]  (333.5,93) .. controls (333.5,79.19) and (344.69,68) .. (358.5,68) .. controls (372.31,68) and (383.5,79.19) .. (383.5,93) .. controls (383.5,106.81) and (372.31,118) .. (358.5,118) .. controls (344.69,118) and (333.5,106.81) .. (333.5,93) -- cycle ;
%Curve Lines [id:da9827963940941153] 
\draw [color={rgb, 255:red, 239; green, 11; blue, 11 }  ,draw opacity=1 ]   (267,75.5) .. controls (346,54.5) and (312,134.5) .. (452,61.5) ;
%Curve Lines [id:da1473654879080728] 
\draw [color={rgb, 255:red, 241; green, 17; blue, 17 }  ,draw opacity=1 ]   (495,81) .. controls (597,132.5) and (536,27.5) .. (654,237.5) ;
%Shape: Circle [id:dp7258073104365859] 
\draw  [color={rgb, 255:red, 16; green, 232; blue, 14 }  ,draw opacity=1 ][dash pattern={on 4.5pt off 4.5pt}][line width=0.75]  (527.5,97) .. controls (527.5,83.19) and (538.69,72) .. (552.5,72) .. controls (566.31,72) and (577.5,83.19) .. (577.5,97) .. controls (577.5,110.81) and (566.31,122) .. (552.5,122) .. controls (538.69,122) and (527.5,110.81) .. (527.5,97) -- cycle ;

% Text Node
\draw (122,79.4) node [anchor=north west][inner sep=0.75pt]    {$p$};
% Text Node
\draw (549,79.4) node [anchor=north west][inner sep=0.75pt]    {$p$};
% Text Node
\draw (353,79.4) node [anchor=north west][inner sep=0.75pt]    {$p$};
% Text Node
\draw (122,130) node [anchor=north west][inner sep=0.75pt]   [align=left] { \ };
% Text Node
\draw (68,230) node [anchor=north west][inner sep=0.75pt]   [align=left] {$\rho -maximum  $};
% Text Node
\draw (290,229.4) node [anchor=north west][inner sep=0.75pt]    {$\rho -minimum$};
% Text Node
\draw (495,228.4) node [anchor=north west][inner sep=0.75pt]    {$\rho -\textit{inflectional}  $};
\end{tikzpicture}

\medskip

It follows from the connectedness of the Milnor arcs at infinity that:

\begin{lemma}\label{lem: RhomaxMinInfe}
Let $f: \bR^2 \to \bR$ be a primitive polynomial function.  Then  each Milnor arc at infinity of $f$ has a well defined $\rho$-type of tangency. 
\end{lemma}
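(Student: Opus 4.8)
The plan is to encode the three tangency types by the sign of the second derivative of $\rho$ along the fibres, to show that this sign can degenerate only on $\mu(M(f))$, and then to use that $\gamma$ is connected and disjoint from $\mu(M(f))$. Fix a Milnor arc $\gamma$; by Remark \ref{re:} it is a smooth $1$-manifold, and I work on the part where $\nabla f\neq 0$, so that for $q\in\gamma$ the fibre $N_q:=\{f=f(q)\}$ is a smooth curve near $q$ which, because $q\in M(f)$, is tangent to the circle $C_{\|q\|}$ at $q$. Choosing a regular local parametrization of $N_q$ with $q$ at parameter $0$ and setting $\phi_q:=\rho|_{N_q}$, tangency gives $\phi_q'(0)=0$, and the trichotomy reads off exactly as: $\phi_q$ has a strict local maximum at $0$ ($\rho$-maximum), a strict local minimum ($\rho$-minimum), or neither, i.e. $\phi_q$ changes sign ($\rho$-inflectional). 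In particular a non-degenerate critical point, $\phi_q''(0)\neq 0$, forces a maximum or a minimum, according to the sign of $\phi_q''(0)$.

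The key step, and the one needing care, is to prove that $\phi_q''(0)=0$ implies $q\in\mu(M(f))$. After a rotation I may assume $q=(r,0)$ with $r=\|q\|$, so $\nabla\rho(q)=(2r,0)$ and, since $q\notin\Sing f$, $\nabla f(q)=(\lambda,0)$ with $\lambda\neq 0$. Expanding $N_q$ as $x=r+ay^2+O(y^3)$, a short computation gives $a=-f_{yy}(q)/(2\lambda)$ and $\phi_q(y)=r^2+(2ar+1)y^2+O(y^3)$, so $\phi_q''(0)=0$ is equivalent to $2ar+1=0$, i.e. to $f_{yy}(q)=\lambda/r$. On the other hand $M(f)=\{\Phi=0\}$ with $\Phi:=yf_x-xf_y$, and evaluating its gradient at $q$ yields $\nabla\Phi(q)=(-r f_{xy}(q),\,\lambda-r f_{yy}(q))$; hence the tangent line $T_qM(f)$ is vertical, i.e. tangent to $C_r$, precisely when $\lambda-r f_{yy}(q)=0$, again $f_{yy}(q)=\lambda/r$. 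Thus $\phi_q''(0)=0$ holds if and only if $\rho\not\pitchfork_q M(f)_{\mathrm{red}}$, that is $q\in\mu(M(f))$. This identification of the degenerate tangency locus with $\mu(M(f))$ is the heart of the argument: it asserts that contact of order $\geq 3$ between a fibre and a circle is the same second-order condition as tangency of the Milnor curve itself to that circle.

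Finally I conclude by connectedness. Since $\gamma\subset M(f)\setminus\overline{D}_{R_0}$ while $\mu(M(f))\subset\overline{D}_{R_0}$ by Proposition \ref{Prop:MMCompact} and the definition of the Milnor radius at infinity, no point of $\gamma$ lies in $\mu(M(f))$; by the previous step $\phi_q''(0)\neq 0$ for every $q\in\gamma$. Hence every point of $\gamma$ is of $\rho$-maximum or $\rho$-minimum type, never inflectional, the type being the sign of $\phi_q''(0)$. Taking a continuous (e.g. unit-speed) family of parametrizations of the fibres along $\gamma$ makes $q\mapsto\phi_q''(0)$ continuous and nowhere zero on the connected set $\gamma$, so its sign is constant and $\gamma$ carries a single $\rho$-type of tangency. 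The main obstacle is the second-order computation above; a secondary point is to make the continuity of $q\mapsto\operatorname{sign}\phi_q''(0)$ precise and to dispose of arcs with $\gamma\subset\Sing f$, for which one runs the identical argument on the reduced curve underlying the (set-theoretic) fibre, along which $\rho$ restricts to a smooth function.
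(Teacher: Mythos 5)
Your endgame (connectedness of $\gamma$ plus local constancy of the type) is the right idea and is essentially all the paper itself says, but the ``heart of the argument'' as you call it --- the equivalence $\phi_q''(0)=0\iff q\in\mu(M(f))$ --- is false, and the proof collapses there. The flaw is that you read off the tangent of $M(f)$ at $q$ from $\nabla\Phi$ with $\Phi=yf_x-xf_y$, whereas $\mu(M(f))$ is defined through the \emph{reduced} curve $M(f)_{\normalfont\text{red}}$. When $\Jac(f,\rho)$ vanishes to order $\ge 2$ along the component carrying $\gamma$, one has $\nabla\Phi\equiv 0$ on $\gamma$, so your identity $\nabla\Phi(q)=(-rf_{xy}(q),\lambda-rf_{yy}(q))$ forces $\phi_q''(0)=0$ at \emph{every} point of $\gamma$, even though $\gamma$ may be perfectly transverse to the circles and hence disjoint from $\mu(M(f))$. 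Concretely, take $f(x,y)=\tfrac12(x^2+y^2)+x^3$: then $\Jac(f,\rho)=6x^2y$, $M(f)_{\normalfont\text{red}}=\{xy=0\}$, $\mu(M(f))=\{(0,0)\}$, and $\{x=0,\ y>0\}$ is a Milnor arc at infinity. On the fibre through $(0,b)$ one computes $\rho=b^2-2x^3$, so the tangency is $\rho$-inflectional at every point of that arc. Thus your key step fails, and worse, your argument would prove that $\rho$-inflectional Milnor arcs never occur --- a statement contradicted by this example and implicitly by the paper, whose Definition \ref{Def:EvenOdd} counts only the $\rho$-maximum and $\rho$-minimum arcs precisely because inflectional ones may be present.

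The correct bridge is not a second-order criterion but a sign-tracking one. At a point $q=q(s)$ of $\gamma$ the fibre through $q$ meets $M(f)$ in an isolated point (the fibre and $M(f)$ share no component along $\gamma$, since $f_{|\gamma}$ is strictly monotone by Proposition \ref{Prop:RLargEnough}), so on each of the two local half-branches of the fibre at $q$ the function $\rho-\rho(q)$ is nonzero and of constant sign; the pair of signs determines the $\rho$-type, and it varies continuously (hence is locally constant) as $q$ moves along the connected arc $\gamma$, because the half-branches stay away from $M(f)$. That is the argument the paper is alluding to with ``it follows from the connectedness of the Milnor arcs''; your rotation-and-Taylor computation is a genuinely different route, but it only works on arcs along which $\Jac(f,\rho)$ is reduced, and those are exactly the arcs where there was never any danger of an inflectional type to begin with.
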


\section{ Ordered 
Milnor arcs and $ \mu$-Clusters }\label{sec":Order&Clust}
In \cite{CP} the authors use the polar curve, where the fibres are tangent to vertical lines (compare with the type of tangency defined in \S \ref{Milnor arcs at infinity}), to find regular values which are atypical, (see Definition \ref{Def:Atyicfibres} below).  Their method detects the phenomena of ``vanishing'' and ``cleaving''  that produce atypical fibres. Their approach consists in making appropriate clusters of unbounded  branches of the polar curve.  We remark that the presence of a polar branch does not produce an atypical fiber as shown by the example explained in \cite[Example 3.4]{TZ}.  Another type of clusters, defined in a different context, is used in \cite{HN} to detect atypical fibers.  In this section we define clusters out of Milnor arcs of a primitive polynomial function  (see \S \ref{Milnor arcs at infinity}) and we refer to them as $\mu$-clusters to remark the difference with the clusters of the polar curve defined in \cite{CP}.

\

By definition, the Milnor arcs at infinity do not intersect mutually. It follows that if $C\subset \bR^{2}$ is some large enough circle centred at the origin and such that intersects all Milnor arcs, then $M(f) \cap C$ is a finite set of points $\{p_{1}, \ldots, p_{s}\}$.  We define the following counterclockwise relation between these points\footnote{Note that this is not an order relation.}: we say that ``\emph{$p_{j}$ is the consecutive of $p_{k}$}'', or that ``\emph{$p_{k}$ is the antecedent of $p_{j}$}'',   if and only if  starting from the point $p_{k}$ and moving counterclockwise along the circle $C$ one arrives at the point $p_{j}$  without meeting any other point of the set $M(f) \cap C$.  

We also say that $\{p_{1}, \ldots, p_{k}\}$ is a sequence of consecutive  points of the set $M(f) \cap C$ if and only if $p_{i+1}$ is the consecutive of $p_{i}$ for all $i=1, \ldots, k-1$.   This relation between the points $M(f) \cap C$ on the circle $C$ allows us to define a similar one among the Milnor arcs at infinity, as follows:  

%%%%%%
\begin{definition}\cite{MT}(Counterclockwise ordering of Milnor arcs at infinity)\label{Def:ordMilnArc} \ \\
We say that
``\emph{$\gamma_{j}$ is the consecutive of $\gamma_{k}$}'', or that ``\emph{$\gamma_{k}$ is the antecedent of $\gamma_{j}$}'',  if and only if  the point $p_{j} := \gamma_{j}\cap C$ is the consecutive of the point $p_{k} := \gamma_{k}\cap C$. This relation is independent on the size of the circle $C$, provided large enough.  We also say that $\{\gamma_{1}, \ldots, \gamma_{k}\}$ is a sequence of consecutive  Milnor arcs at infinity if and only if $\{p_{1}, \ldots, p_{k}\}$, where $p_{i}:= \gamma_{i}\cap C$, is a sequence of consecutive  points of the set $M(f) \cap C$.
\end{definition}
%%%%%%%%%%

Let $\gamma \in \mathfrak{M}_{\text{arc}}(f)$ and let  $ \gamma : \left(R_0, \infty \right) \to \bR^2$  be a parametrization  such that $ \lim_{t \to \infty} \| \gamma(t)\| = \infty$. If $ \gamma \cap \Sing f = \emptyset$, then the function $ f (\gamma(t))$ is either strictly increasing or strictly decreasing by Proposition \ref{Prop:RLargEnough} (b). Then we define:  

\begin{definition}\label{Def:In(de)creasing MilnArc}
 Let $ f:\bR^2 \to \bR$ be a polynomial function and let $ \lambda \in \bR \cup \{ \pm \infty\}$  such that $ \lim_{t \to \infty} f(\gamma(t)) = \lambda $. We say that $ \gamma$ is  \textit{increasing to} $ \lambda$ and denote it by $f\stackrel{\gamma }{\nearrow} \lambda$ if $ f(\gamma(t))$ is increasing. Similarly, we say that $ \gamma$ is a \textit{decreasing to} $ \lambda$ and denote it by $f\stackrel{\gamma }{\searrow} \lambda$ if $ f(\gamma(t))$ is decreasing.
 
\end{definition}

%%%%%%%%%%
\begin{definition}[Clusters of Milnor arcs at infinity]\label{d:cluster}
We call \textit{increasing cluster at $\lambda\in \bR  \cup \{+\infty\}$} a sequence of consecutive Milnor arcs at infinity $\gamma_k, \ldots ,  \gamma_{k+l}$, $l\ge 0$,  such that the condition $f\stackrel{\gamma_i}{\nearrow} \lambda$ holds precisely for all $i= k, \ldots , k+l$ and does not hold for the antecedent of $\gamma_k$ nor for the consecutive of $\gamma_{k+l}$.

Similarly, we define a \textit{decreasing cluster at} $\lambda  \in \bR  \cup \{-\infty\}$ by replacing $\searrow $ instead of $ \nearrow$ in the above definition. %We will use the generic name ``Milnor cluster'' for any of such clusters.
\end{definition}
A similar definition of Milnor clusters was given in \cite{HN} in  the  setting of surfaces in 
$\bR^{n}$ instead of $\bR^{2}$.

\

Let $C_R$ be the circle centred at the origin with radius $R>R_0$, where $R_0$ denotes the Milnor radius at infinity of the primitive polynomial $f$.  Let $\gamma_{i},  \gamma_{j} \in \mathfrak{M}_{\text{arc}}(f)$ such that $\gamma_j$ is the consecutive of $\gamma_i$ and let $p_i = \gamma_i  \cap C_R$ and $p_j= \gamma_j \cap C_R$.  We denote by $\Gamma^{i,j}_R$ the set of all points in $C_R$ that one meets when moving counterclockwise along $C$   from $p_i$ to $p_j$.  Finally, one defines \textit{the band} between $ \gamma_i$ and $\gamma_j$ (see also \cite[Definition 2.4]{HN}) as: 

\[
]\gamma_i,\gamma_j[ := \bigcup_{R>R_0} \Gamma^{i,j}_{R},
\] and we denote its topological closure in $  \bR^2$ by $  \left[ \gamma_i , \gamma_j  \right]$. 

\medskip

 We denote by $ \gamma(s) $ a parametrization  $ \gamma$ such that $ \lim_{s \to \infty} \| \gamma(s) \| = + \infty$ as in Definition \ref{Def:In(de)creasing MilnArc}. The following lemma is adapted from \cite[Lemma 3.1]{HN}.

\begin{lemma} \label{Lem: estractedfromHN}
Let  $ f: \bR^2 \to \bR$ be a primitive polynomial function,  $ \gamma_i \in  \mathfrak{M}_{\text{arc}}(f)$  increasing to $ \lambda \in \bR$. Then there exists $ \varepsilon>0$ such that for every $ t \in \left] \lambda- \varepsilon , \lambda \right[$ there exists a unique $ s_t > R_0$ such that $ f( \gamma_{i}(s_t)) = t$. Moreover: 

\begin{enumerate}

\rm \item \it  If ${\normalfont \text{Jac}}(f,\rho) >0 $ in $ \left]  \gamma_{i-1}, \gamma_{i} \right[$ and $ {\normalfont \text{Jac}}(f,\rho)<0$ in $ \left]  \gamma_{i}, \gamma_{i+1} \right[$, then there exist $ \delta_{-}$ and $ \delta_{+}$ in $ \left[ R_0 , s_t \right[$ such that the intersection of $ f^{-1}(t)$ with the band $ \left] \gamma_{i-1} , \gamma_i \right[$ \big(resp. $\left] \gamma_{i}, \gamma_{i+1}  \right[  $ \big) is a continuous curve $ \widetilde{h}: \left]  \delta_{-}, s_t\right[ \to \bR^2$ \big(resp. $ \widetilde{h}: \left]  \delta_{+}, s_t\right[ \to \bR^2$\big) with $ \| \gamma_{i}(s)\| = \| \widetilde{h}(s) \|$ for all $ s  \in \left] \delta_{-}, s_t\right[$ \big(resp. $ s  \in \left] \delta_{+}, s_t \right[$\big). 

\smallskip
\rm \item \it If $ {\normalfont \text{Jac}}(f,\rho)<0 $ in $ \left]  \gamma_{i-1}, \gamma_{i} \right[$ and $ {\normalfont \text{Jac}}(f,\rho)>0$ in $ \left]  \gamma_{i}, \gamma_{i+1} \right[$, then there exist $ \delta_{-}$ and $ \delta_{+}$ in $ \left]s_t , \infty \right[$ such that the intersection of $ f^{-1}(t)$ with the band $ \left] \gamma_{i-1} , \gamma_i \right[$ (resp. $\left] \gamma_{i}, \gamma_{i+1}  \right[  $) is a continuous curve $ \widetilde{h}: \left]   s_t, \delta_{-}\right[ \to \bR^2$ \big(resp. $ \widetilde{h}: \left]  s_t, \delta_{+}\right[ \to \bR^2$\big) with $ \| \gamma_{i}(s)\| = \| \widetilde{h}(s) \|$ for all $ s  \in \left]  s_t, \delta_{-} \right[$ \big(resp. $ s  \in \left] s_t, \delta_{+}\right[$\big). 

\smallskip
\rm \item \it  If $ {\normalfont \text{Jac}}(f,\rho) $ has the same sign in $ \left]  \gamma_{i-1}, \gamma_{i} \right[$ and in $ \left]  \gamma_{i}, \gamma_{i+1} \right[$, then there exist $ \delta_{-}$ and $ \delta_{+}$ in $ \left] R_0 , \infty \right[$ such that the intersection of $ f^{-1}(t)$ with the band $ \left] \gamma_{i-1} , \gamma_i \right[$ \big(resp. $\left] \gamma_{i}, \gamma_{i+1}  \right[  $\big) is a continuous curve $ \widetilde{h}: \left]  \delta_{-}, s_t\right[ \to \bR^2$ \big(resp. $ \widetilde{h}: \left]  s_t, \delta_{+}\right[ \to \bR^2$\big) with $ \| \gamma_{i}(s)\| = \| \widetilde{h}(s) \|$ for all $ s  \in \left] \delta_{-}, s_t \right[$ \big(resp. $ s  \in \left]  s_t,\delta_{+} \right[$\big), or a continuous curve $ \widetilde{h}: \left]   s_t, \delta_{-}\right[ \to \bR^2$ \big(resp. $ \widetilde{h}: \left]  \delta_{+}, s_t\right[ \to \bR^2$\big) with $ \| \gamma_{i}(s)\| = \| \widetilde{h}(s) \|$ for all $ s  \in \left]  s_t, \delta_{-} \right[$ \big(resp. $ s  \in \left] \delta_{+}, s_t\right[$\big). 
\end{enumerate} 
\end{lemma}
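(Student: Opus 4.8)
The plan is to reduce the whole statement to the behaviour of the single function $\rho$ restricted to the fibre $f^{-1}(t)$, whose derivative along the fibre is governed by the sign of $\Jac(f,\rho)$, and then to read off the three cases from the sign pattern in the two adjacent bands. First I would fix $\varepsilon$. Since $\gamma_i$ is increasing to $\lambda$ we have $\gamma_i\cap\Sing f=\emptyset$, so by Proposition \ref{Prop:RLargEnough}(b) the composite $s\mapsto f(\gamma_i(s))$ is strictly increasing with limit $\lambda$; choosing $\varepsilon>0$ so that $\lambda-\varepsilon$ exceeds the (finite) value $\lim_{s\to R_0^+}f(\gamma_i(s))$ yields, by the intermediate value theorem together with strict monotonicity, a unique $s_t>R_0$ with $f(\gamma_i(s_t))=t$ for each $t\in\,]\lambda-\varepsilon,\lambda[$. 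Write $q_0:=\gamma_i(s_t)$ and $R:=\|q_0\|$. Because $q_0\in M(f)\setminus\Sing f$, the gradient $\nabla f(q_0)$ is nonzero, and since $\Jac(f,\rho)(q_0)=0$ forces $\nabla f(q_0)\parallel\nabla\rho(q_0)=2q_0$, it is radial: $\nabla f(q_0)=\beta q_0$ with $\beta\neq 0$. Thus $f^{-1}(t)$ is a smooth curve near $q_0$, tangent there to the circle of radius $R$, with a well-defined $\rho$-type of tangency by Lemma \ref{lem: RhomaxMinInfe}.

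The key computation is the following. Parametrizing the regular curve $f^{-1}(t)$ by arclength $u$ with unit tangent $T=\tfrac{1}{\|\nabla f\|}(-f_y,f_x)$, one gets
\[
\frac{d}{du}\rho=\langle\nabla\rho,T\rangle=\frac{1}{\|\nabla f\|}\big(f_x\rho_y-f_y\rho_x\big)=\frac{1}{\|\nabla f\|}\Jac(f,\rho).
\]
Hence along $f^{-1}(t)$ the function $\rho$ is strictly monotone on any arc contained in a single open band, its direction dictated by the sign of $\Jac(f,\rho)$ there; this sign is constant and nonzero on each band because the band lies outside $\overline{D}_{R_0}$, so it meets $M(f)=\{\Jac(f,\rho)=0\}$ only along its bounding Milnor arcs. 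In particular, every branch of $f^{-1}(t)$ inside a band is a graph over the radius.

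Next I would invoke the implicit function theorem at $q_0$: the fibre crosses from $]\gamma_{i-1},\gamma_i[$ to $]\gamma_i,\gamma_{i+1}[$ through $q_0$, and on each side the branch is the radial graph above, with radius tending to $R$ as one approaches $q_0$. Since $\|\gamma_i(s)\|$ is strictly increasing in $s$ with value $R$ at $s=s_t$ (Proposition \ref{Prop:RLargEnough}(a)), the matching $\|\widetilde h(s)\|=\|\gamma_i(s)\|$ reparametrizes each branch by $s$: a branch whose radii lie below $R$ is parametrized on an interval ending at $s_t$, i.e. $]\delta,s_t[$, and one whose radii lie above $R$ on an interval beginning at $s_t$, i.e. $]s_t,\delta[$. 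Reading off which occurs from the displayed sign of $\tfrac{d}{du}\rho$ gives the three cases: opposite signs with $\Jac(f,\rho)>0$ on the left force radii below $R$ on both sides (the $\rho$-maximum case (a)); the reverse opposite signs force radii above $R$ on both sides (the $\rho$-minimum case (b)); equal signs force radius below $R$ on one side and above $R$ on the other (the inflectional case (c)), which is exactly the stated alternative there.

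The main obstacle I anticipate is twofold. First is the orientation bookkeeping: one must track how the sign of $\beta$ in $\nabla f(q_0)=\beta q_0$ determines whether moving in the $+T$ direction enters $]\gamma_i,\gamma_{i+1}[$ or $]\gamma_{i-1},\gamma_i[$, and then verify that the conclusion is insensitive to this sign in cases (a)--(b) while producing the two possibilities recorded in (c). Second, and more delicate, is upgrading ``the branch of $f^{-1}(t)$ through $q_0$'' to ``the whole intersection of $f^{-1}(t)$ with the band is a single continuous curve''; this is precisely where the smallness of $\varepsilon$ (equivalently the largeness of $s_t$ and $R$) is needed, since for $t$ close to $\lambda$ the only radius-extremum of $f^{-1}(t)$ near $\gamma_i$ is the tangency at $q_0$, so no further branch of the fibre enters the band at the relevant radii.
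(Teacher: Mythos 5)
First, a point of context: the paper itself offers no proof of this lemma --- it is stated as ``adapted from \cite[Lemma 3.1]{HN}'' and left without argument --- so your proposal can only be measured against what such a proof must contain. Your strategy is the right one and is essentially the argument underlying the cited source: the identity $\tfrac{d}{du}(\rho\circ c)=\tfrac{1}{\|\nabla f\|}\Jac(f,\rho)$ along an arclength parametrization $c$ of $f^{-1}(t)$ with unit tangent $T=\tfrac{1}{\|\nabla f\|}(-f_y,f_x)$; the constancy of the sign of $\Jac(f,\rho)$ on each open band (which by the definition of consecutive arcs contains no point of $M(f)$); the resulting strict monotonicity of $\rho$ on every branch of the fibre inside a band, so that each branch is a radial graph; and the existence and uniqueness of $s_t$ from the strict monotonicity of $f\circ\gamma_i$ given by Proposition \ref{Prop:RLargEnough}.

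The genuine gap is the orientation step, which you flag but propose to resolve in the wrong way. You suggest the conclusion should be ``insensitive'' to the sign of $\beta$ in $\nabla f(q_0)=\beta q_0$ in cases (a)--(b). It is not: writing $q_0=(x_0,y_0)$, one has $T(q_0)=\tfrac{\beta}{\|\nabla f(q_0)\|}(-y_0,x_0)$, so $+T$ points counterclockwise, i.e.\ from $\left]\gamma_{i-1},\gamma_{i}\right[$ into $\left]\gamma_{i},\gamma_{i+1}\right[$, exactly when $\beta>0$; reversing this sign interchanges the conclusions of (a) and (b), turning a $\rho$-maximum into a $\rho$-minimum. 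What rescues the statement is that the hypothesis that $\gamma_i$ is \emph{increasing} forces $\beta>0$: along $\gamma_i$ one has $\tfrac{d}{ds}(f\circ\gamma_i)=\langle\nabla f(\gamma_i(s)),\gamma_i'(s)\rangle=\tfrac{\beta}{2}\tfrac{d}{ds}(\rho\circ\gamma_i)$, and both $f\circ\gamma_i$ and $\rho\circ\gamma_i$ are strictly increasing (the latter by Proposition \ref{Prop:RLargEnough}(a)), hence $\beta>0$ at every point of $\gamma_i$. Without this one-line determination your argument does not actually distinguish cases (a) and (b). A secondary, lesser issue: your sketch does not deliver that the \emph{whole} intersection $f^{-1}(t)\cap\left]\gamma_{i-1},\gamma_{i}\right[$ is a single curve. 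Each component of that intersection is a radial graph, and $f^{-1}(t)$ meets each bounding Milnor arc in at most one point (monotonicity of $f$ on the arcs), but a component running from $C_{R_0}$ to infinity inside the band without touching either bounding arc is not excluded by shrinking $\varepsilon$. The statement is evidently meant to describe the component through $q_0$, and with that reading, plus the sign determination above, your argument goes through.
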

\smallskip
%%%
\begin{remark}\label{Rem:AlternancyRhoType}
From Lemma \ref{Lem: estractedfromHN} neither of the following two situations occurs: 

$(1)$ both $ \gamma_i ,\gamma_{i+1}$  have $\rho$-maximum type. 

$(2)$ both $ \gamma_i ,\gamma_{i+1}$  have $\rho$-minimum type. \end{remark}

\smallskip

\section{Injective function between connected components of the fibres of $f$ and clusters}\label{sec:injectiveFunction}

%After \cite{CP} and \cite{HN} it is well known that in the Polar and Milnor settings, the fiber alternates its tangency type when crossing the arcs in a cluster. 

In this section we show that there exists an injective function between the  set of $ \mu$-clusters with the set of connected components of fibres of the restriction of $f$ outside a compact set, see Theorem \ref{Thm: injFunctAlpha}.  This result plays an important role in the characterization of the atypical values of a primitive polynomial function $f$ as we will show in next section.  For avoiding repetition, we treat only the case of increasing $\mu$-clusters. The treatment for decreasing $\mu$-clusters  is analogous.

\begin{proposition}\label{pro:alphWdefFunc}
Let $f:\bR^2 \to \bR$ be a primitive polynomial function, let $ \mathcal{C}_1 , \ldots , \mathcal{C}_{r}$ be all the increasing $\mu$-clusters associated to a regular value $ \lambda \in \bR $.  Then there exists $\eta^{\ast} > 0$ such that for every $ t \in \left] \lambda - \eta^{\ast} , \lambda \right[$ and every $ i =1, \ldots , r$ there exists a unique connected component $ M_{t,k}$ of $ f^{-1}(t)$ such that $ M_{t,k}$ intersects all Milnor arcs at infinity in $ \mathcal{C}_{i}$. 
\end{proposition}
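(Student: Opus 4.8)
The plan is to use Lemma~\ref{Lem: estractedfromHN} to follow how the fibre $f^{-1}(t)$ threads through the bands separating consecutive arcs of a fixed increasing cluster $\mathcal{C}_i=\{\gamma_k,\dots,\gamma_{k+l}\}$, and to show that the points at which $f^{-1}(t)$ meets these arcs all lie on one and the same connected component.

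First I would fix $\eta^{\ast}$. Every arc $\gamma_j$ of every cluster is increasing to $\lambda$, so Lemma~\ref{Lem: estractedfromHN} yields $\varepsilon_j>0$ such that for $t\in\,]\lambda-\varepsilon_j,\lambda[$ there is a unique $s_t^{(j)}>R_0$ with $f(\gamma_j(s_t^{(j)}))=t$; since $f_{|\gamma_j}$ is strictly increasing to $\lambda$ by Proposition~\ref{Prop:RLargEnough}(b), the point $q_j:=\gamma_j(s_t^{(j)})$ is in fact the unique point of $f^{-1}(t)\cap\gamma_j$. As there are finitely many clusters, each with finitely many arcs, I set $\eta^{\ast}:=\min_j\varepsilon_j$ over all arcs appearing in $\mathcal{C}_1,\dots,\mathcal{C}_r$.

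The core step is connectedness. For two consecutive arcs $\gamma_j,\gamma_{j+1}$ of $\mathcal{C}_i$, Lemma~\ref{Lem: estractedfromHN} presents $f^{-1}(t)\cap\,]\gamma_j,\gamma_{j+1}[$ as a single continuous curve $\widetilde{h}$ normalized by $\|\gamma_j(s)\|=\|\widetilde{h}(s)\|$. Applying the lemma with $i=j$ (so $]\gamma_j,\gamma_{j+1}[$ is the right-hand band of $\gamma_j$) forces one end of $\widetilde{h}$ to limit to $q_j$ as $s\to s_t^{(j)}$; applying it with $i=j+1$ (so the same band is the left-hand band of $\gamma_{j+1}$) forces the other end to limit to $q_{j+1}$. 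Hence the closure of $\widetilde{h}$ is an arc of $f^{-1}(t)$ joining $q_j$ to $q_{j+1}$. Concatenating these band-arcs for $j=k,\dots,k+l-1$ --- they meet precisely at the shared tangency points $q_j$ --- produces a connected subset of $f^{-1}(t)$ containing every $q_j$, which therefore lies in a single connected component $M_{t,k}$. By construction $M_{t,k}$ meets every arc of $\mathcal{C}_i$, which gives existence.

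Uniqueness is then immediate: each $\gamma_j$ meets $f^{-1}(t)$ only at $q_j$, and $q_j\in M_{t,k}$, so any component meeting even one arc of $\mathcal{C}_i$ must coincide with $M_{t,k}$; in particular it is the unique component meeting all of them. I expect the main obstacle to be the patching in the connectedness step: one has to check, uniformly across the three sign-cases of $\mathrm{Jac}(f,\rho)$ in Lemma~\ref{Lem: estractedfromHN} (which encode the $\rho$-maximum, $\rho$-minimum and inflectional tangency types, with the alternation recorded in Remark~\ref{Rem:AlternancyRhoType}), that the band-curve really limits to the correct tangency point at each end and that adjacent band-arcs share exactly that point, so that the concatenation is connected rather than broken at the arcs.
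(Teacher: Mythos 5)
Your proposal is correct and follows essentially the same route as the paper: both fix $\eta^{\ast}$ via Lemma~\ref{Lem: estractedfromHN} applied to all arcs of all clusters, propagate the connected component from one arc to the next consecutive arc of the cluster through the intervening band using that lemma, and deduce uniqueness from the strict monotonicity of $f$ along each Milnor arc (Proposition~\ref{Prop:RLargEnough}(b)). Your version is merely more explicit about how the band-curve $\widetilde{h}$ limits to the tangency points at its two ends, which the paper leaves implicit.
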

\begin{proof}
Let $ \mathcal{C}_1 , \ldots , \mathcal{C}_{r}$ be all the increasing $\mu$-clusters associated to a regular value $ \lambda \in \bR $.  Applying the first part of Lemma \ref{Lem: estractedfromHN}  to each Milnor arc at infinity in $ \cup _{i-1}^r \mathcal{C}_i$,  there exist $ \eta^{\ast}>0$ such that for every $ t \in ] \lambda-\eta^{\ast},\lambda [$ the fibre $ f^{-1}(t) $ intersects each $ \gamma \in \cup _{i-1}^r \mathcal{C}_i $ in a unique point.

For each cluster $ \cC_j$, $ j=1, \ldots , r,$  if $ \cC_j$ has a unique Milnor arc  at infinity there is nothing to be proved.  Otherwise, $ \cC_j$ has more than one Milnor arc at infinity and thus we chose $ \gamma_i , \gamma_{i+1}$ two consecutive Milnor arcs in $ \cC_j$ and let $ M_{t,k}$ be the connected component of $ f^{-1}(t)$ which intersect $ \gamma_i$.  Since $M_{t,k}$ is a regular fiber, it intersect the band $[\gamma_i,\gamma_{i+1}]$, and so it intersect $\gamma_{i+1}$ by Lemma \ref{Lem: estractedfromHN}.  Applying the same reasoning  inductively over the Milnor arcs at infinity in $ \cC_j$ we conclude that $ M_{t,k}$ intersect every Milnor arc at infinity in $ \cC_j$. 

The uniqueness follows from the fact that $ f$ restricted to any Milnor arc is strictly monotone, see Proposition \ref{Prop:RLargEnough} (b). 
\end{proof}

Proposition \ref{pro:alphWdefFunc} proves that there is a well defined function  between the set of $ \mu$-clusters and associated to the same regular value $\lambda$ and the set of fibre components of $f$:

\begin{definition}\label{Def:alphaCorresp}
\normalfont Let $f : \bR^2 \rightarrow \bR $ be a primitive polynomial function. Let $\mathcal{C}_{1} , \ldots , \mathcal{C}_{r}$ be all the increasing  $\mu$-clusters
 associated to $\lambda \in \bR$ and let $ \eta^{\ast}>0$ as in Proposition \ref{pro:alphWdefFunc}. Then for every $ t \in \left] \lambda - \eta^{\ast}, \lambda  \right[$ we define the function

\begin{equation*}
\begin{split}
 \alpha'_{t}: \{ \mathcal{C}_i\}_{i=1 ,\ldots,r} &  \rightarrow \{ \text{connected components of } f^{-1}(t)  \}\\
\mathcal{C}_{i}& \mapsto M_{t,j},
\end{split}
\end{equation*} where $M_{t,j}$ is a connected component of $ f^{-1}(t)$ intersecting all Milnor arcs at infinity of $ f $ in $\mathcal{C}_i$ (see Proposition \ref{pro:alphWdefFunc}).   Similarly, one can define this function for the decreasing $\mu$-clusters of $f $ associated to $ \lambda$.
\end{definition}

\smallskip

For any $R>0$ we denote by $f_{R}$ the restriction of $f$ to $ \bR^2 \m \overline{D}_{R}$. Then each fibre $ f_{R}^{-1}(t)$ is a finite union of connected components that we denote by $ F_{t,j}$.  The function $ \alpha'_{t}$ is well-defined by Proposition \ref{pro:alphWdefFunc}.

\begin{theorem}\label{Thm: injFunctAlpha}
Let  $ f : \bR^2 \to \bR$ be a primitive polynomial function,  let $ \mathcal{C}_1  , \ldots , \mathcal{C}_{n}$ be all the increasing $\mu$-clusters of $ f$ associated to $ \lambda \in \bR$. Then there exist $ \eta >0$ and $ R >0 $ large enough such that the function

\begin{equation}\label{e:corrAlph}
\begin{split}
 \alpha_{t}: \{ \mathcal{C}_i\}_{i=1 ,\ldots,n} &  \rightarrow \{ \normalfont\text{connected components of } f_{R}^{-1}(t)  \}\\
\mathcal{C}_{i}& \rightarrow F_{t,j},
\end{split}
\end{equation} is an injective function for all  $ t \in \left] \lambda - \eta , \lambda  \right[  $. 
\end{theorem}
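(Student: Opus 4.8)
The plan is to upgrade the correspondence $\alpha'_t$ from Definition \ref{Def:alphaCorresp}, which maps $\mu$-clusters to connected components of the full fibre $f^{-1}(t)$, into the asserted map $\alpha_t$ landing in connected components of the truncated fibre $f_R^{-1}(t)$, and then to establish injectivity. The two tasks are essentially independent: first one must choose $R$ so that restricting a component $M_{t,j}$ of $f^{-1}(t)$ to the outside of $\overline{D}_R$ does not accidentally merge two distinct clusters or split the image of a single cluster; second one must verify that distinct clusters $\mathcal{C}_i \neq \mathcal{C}_{i'}$ cannot be sent to the same component $F_{t,j}$.

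First I would fix $R > R_0$ large enough that every Milnor arc at infinity meets the circle $C_R$ (possible since, by Proposition \ref{Prop:RLargEnough}(a), $\rho$ is strictly increasing to infinity along each arc) and simultaneously large enough that the counterclockwise ordering of the arcs on $C_R$ is the stable one of Definition \ref{Def:ordMilnArc}. Shrinking $\eta \le \eta^\ast$ from Proposition \ref{pro:alphWdefFunc}, I can also arrange that for all $t \in \left] \lambda - \eta, \lambda \right[$ each arc $\gamma \in \cup_i \mathcal{C}_i$ meets $f^{-1}(t)$ outside $\overline{D}_R$ in a unique point, with $\rho$-value exceeding $R^2$. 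Then I would define $\alpha_t(\mathcal{C}_i)$ to be the unique connected component $F_{t,j}$ of $f_R^{-1}(t)$ containing the common intersection point of $\alpha'_t(\mathcal{C}_i) = M_{t,j}$ with the arcs of $\mathcal{C}_i$; well-definedness reduces to checking that the points of $M_{t,j} \cap \gamma$ for the various $\gamma \in \mathcal{C}_i$ all lie in a single component of $M_{t,j} \setminus \overline{D}_R$, which follows from Lemma \ref{Lem: estractedfromHN} since between two consecutive arcs of a cluster the fibre component stays outside the disk of radius $\|\gamma_i(s_t)\|$.

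Next I would prove injectivity. Suppose $\alpha_t(\mathcal{C}_i) = \alpha_t(\mathcal{C}_{i'}) = F_{t,j}$ with $i \neq i'$. Being connected in $f_R^{-1}(t)$, the component $F_{t,j}$ is contained in a single component $M_{t,k}$ of the full fibre, so $M_{t,k}$ meets every arc of both $\mathcal{C}_i$ and $\mathcal{C}_{i'}$; hence $M_{t,k} = \alpha'_t(\mathcal{C}_i) = \alpha'_t(\mathcal{C}_{i'})$. The key point is that although $\alpha'_t$ itself need not be injective (one fibre component may legitimately touch several clusters), two distinct clusters whose arcs are joined inside $\bR^2 \setminus \overline{D}_R$ by a single arc of $F_{t,j}$ would force a path in $F_{t,j}$ crossing from $\mathcal{C}_i$ to $\mathcal{C}_{i'}$ entirely outside $\overline{D}_R$; by the band analysis of Lemma \ref{Lem: estractedfromHN} such a path must cross some Milnor arc $\gamma$ lying strictly between $\mathcal{C}_i$ and $\mathcal{C}_{i'}$ in the counterclockwise ordering, and on that intermediate arc $f$ is \emph{not} increasing to $\lambda$ (else it would belong to one of the clusters). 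Along such a $\gamma$, Proposition \ref{Prop:RLargEnough}(b) gives $f_{|\gamma}$ strictly monotone with limit $\neq \lambda$ from the relevant side, so for $t$ close enough to $\lambda$ the level $t$ is not attained on $\gamma$ outside $\overline{D}_R$; the fibre component therefore cannot cross $\gamma$ there, contradicting the existence of the connecting path. This separation yields $\mathcal{C}_i = \mathcal{C}_{i'}$.

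I expect the main obstacle to be the injectivity step, specifically making rigorous the claim that a connected arc of $F_{t,j}$ joining a point on an arc of $\mathcal{C}_i$ to a point on an arc of $\mathcal{C}_{i'}$, while remaining outside $\overline{D}_R$, is forced to traverse an intermediate Milnor arc. This requires a careful topological argument about how $F_{t,j}$ threads through the successive bands $]\gamma_k,\gamma_{k+1}[$ and an honest use of the sign of $\Jac(f,\rho)$ in each band to control on which side of each arc the fibre lives; the trichotomy of Lemma \ref{Lem: estractedfromHN} (together with Remark \ref{Rem:AlternancyRhoType}) is exactly the tool for this, but bookkeeping the increasing/decreasing alternation across a cluster boundary is where the delicate case analysis lies. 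The choice of $R$ and $\eta$ uniform over all the finitely many clusters is routine by taking maxima and minima, and should be relegated to a final paragraph.
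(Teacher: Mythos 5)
Your overall strategy is the same as the paper's: reduce injectivity to the existence of an intermediate Milnor arc $\xi$ lying strictly between the two clusters which is not increasing to $\lambda$, and then argue that a connected component of $f_R^{-1}(t)$ cannot cross $\xi$ outside $\overline{D}_R$. The gap is in that last step. Your claim that ``for $t$ close enough to $\lambda$ the level $t$ is not attained on $\gamma$ outside $\overline{D}_R$'' does not follow from the data you have fixed: you chose $R$ only so that every arc meets $C_R$ with the stable ordering (essentially $R>R_0$), and $\eta$ only so that the arcs of the clusters meet $f^{-1}(t)$ in a unique point. Consider an intermediate arc $\xi$ increasing to some $\beta>\lambda$ whose value at $\xi\cap C_R$ lies below $\lambda$: the range of $f$ on $\xi\setminus\overline{D}_R$ is then an interval containing a left neighbourhood of $\lambda$, so the level $t$ \emph{is} attained there for every $t$ slightly below $\lambda$, and shrinking $\eta$ makes the crossing more, not less, likely. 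Monotonicity with limit $\neq\lambda$ does not by itself exclude $t$ from the range of $f$ on $\xi\setminus\overline{D}_R$; an analogous failure occurs for decreasing intermediate arcs whose limit lies below $\lambda$.

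The paper closes exactly this gap with two additional, effectively computable choices that are absent from your proposal: (i) $\eta$ is taken so small that none of the finitely many limit values $\beta_1,\dots,\beta_s\neq\lambda$ of the other Milnor arcs lies in $\left]\lambda-\eta,\lambda\right[$, which forces any intermediate arc on which $t$ is attained to be increasing to some $\beta_j>\lambda$; and (ii) $R$ is taken larger than the norms of all points of the finite set $f^{-1}(\lambda)\cap M(f)\setminus(D_{R_0}\cup \Sing f)$. With (ii) in hand, if $t$ were attained on $\xi$ at a point $q$ with $\|q\|>R$, then since $f_{|\xi}$ increases from $t<\lambda$ to $\beta_j>\lambda$, the intermediate value theorem yields a point $p\in\xi$ beyond $q$ with $f(p)=\lambda$ and $\|p\|>R$, contradicting the choice of $R$. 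Without constraint (ii) your contradiction cannot be derived; once you add (i) and (ii), your argument becomes essentially the paper's proof.
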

\begin{proof}
By Proposition  \ref{pro:alphWdefFunc} there exists $ \eta ^{\ast} >0 $ such that for every $ t \in ]\lambda - \eta ^{\ast},\lambda [$ we have the well defined function $ \alpha'_t$ as in Definition \ref{Def:alphaCorresp}.  Let $ \beta_{1},\ldots , \beta_s \in   \bR \cup \{ \pm \infty \} \m \{\lambda\} $ such that there exists $ \nu \in \mathfrak{M}_{\text{arc}}(f)$ with $ f_{|\nu} \to \beta_j $ for some $ j=1, \ldots , s$.  Now choose $ \eta>0 $ such that $  \lambda - \eta^{\ast} , \beta_{1} , \ldots , \beta_{s}  \not\in \left] \lambda - \eta, \lambda \right[$  and let us fix $ R>\text{max} \{ R_0, \|p_1\|, \ldots ,\| p_l\| \}$, where the points $ p_1 , \ldots , p_{l} \in \bR^2$ are the elements of the finite set $f^{-1}(\lambda) \cap M(f)\setminus (D_{R_0} \cup \Sing f)$ and $ R_0$ is the Milnor radius at infinity of $f$ as defined before Remark \ref{re:}.  We shall prove that for this choice of $ \eta>0 $ and $ R>0$ the function $\alpha_t$ in \eqref{e:corrAlph} is injective. In order to do it we prove that for any two increasing $ \mu$-clusters $ \cC , \cC'$ associated to $ \lambda$ there is no connected component $ F_{t,k}$ of $ f_{R}^{-1}(t)$ for $ t \in ]\lambda - \eta , \lambda[$ intersecting both $ \mu$-clusters. By contradiction let us assume that there exists such connected component $ F_{t,k}$ intersecting $ \cC $ and $\cC'$ and let us denote by $ \gamma_{1} < \ldots <\gamma_{k_{1}} $  the ordered Milnor arcs at infinity in $ \cC$ and $\gamma_{1}' < \ldots < \gamma_{k_{2}}'$ the ordered Milnor arcs in $ \cC'$. Since the $ \alpha_{t} $ defined for $ f_{R}$ is a well defined function, the connected components $ F_{t,k}$ intersect all $ \gamma_i$ and $ \gamma'_{j}$ for $ i=1,\ldots,k_1$ and $ j = 1, \ldots , k_2$.  Without lost of generality we will assume that $ \gamma_{k_1} < \gamma'_{1}$. In particular $ F_{t,k}$ intersects $\gamma_{k_1}$ and $ \gamma'_{1} $.  Since $ \cC $ and $ \cC'$ are two different $\mu$-clusters we have that there exists $ \xi \in \mathfrak{M}_{\text{arc}}(f) $ such that $ \gamma_{k_1} < \xi < \gamma'_{1} $ and $ \xi$ is not increasing and associated to $ \lambda$. Otherwise we contradict Definition \ref{d:cluster}. Moreover, from the fact that $ F_{t,k}$ is connected and intersects the increasing Milnor arcs at infinity $ \gamma_{k_1}, \gamma'_{1}$, we conclude that $\xi$ is also increasing and thus associated to $ \beta_j \neq \lambda$ for some $ j = 1,\ldots , s$.
 
Since $ t > \lambda - \eta$ and $\beta_j \not\in \left] \lambda - \eta , \lambda \right[ $ we conclude that $\beta_j> \lambda$ and from Proposition \ref{Prop:RLargEnough} (b) there exists $ p \in \xi _{i_s} \setminus D_{R}$ such that $p \in f^{-1}(\lambda)$. This is a contradiction with our choice of $ R$ and thus $F_{t,k}$ does not intersect  intersect $ \gamma_{k1}$ and $ \gamma'_1$. This proves that $ \alpha_t$ is injective and finished the proof.
\end{proof}

\section{Atypical values and $\mu$-clusters}\label{sec:AtipcDetect}

In this section we present the main result of this paper, it is a characterization of the regular atypical values in terms of the parity of the $\mu$-clusters associated to them. Our characterization is based in the detection of the vanishing and splitting phenomena  at infinity (as defined in \cite{DJT}) making use of the correspondence given in Theorem \ref{Thm: injFunctAlpha}.   Let us start by recalling some definitions and statements related to atypical values from \cite{T2,JT,DT,DJT}. 

\begin{definition}\cite{T2}\label{Def:Atyicfibres}
\normalfont 
We say that $f_R$ is a $\mathcal{C} ^{\infty}$ trivial fibration at $\lambda \in \bR$ if there is a neighborhood $I$ of $\lambda \in \bR$ such that the restriction $f_{R}: f_{R}^{-1}(I) \rightarrow I$ is a $\mathcal{C}^{\infty}$ trivial fibration and we say that $ \lambda$ is a \textit{typical value of $f_{R}$}. If $\lambda \in \bR$ does not satisfy this property, then we say that $\lambda$ is an \textit{atypical value} of $ f_{R}$ and that $f_R^{-1}(\lambda)$ is an \textit{atypical fibre}.% We shall denote by $\text{Atyp}f_R$ the set of atypical values (or fibres) of $f_R$.      
\end{definition}

%In the situation of a polynomial function one cannot just apply Ehresmann's theorem to control the trivialization in the neighborhood of infinity.

Next definition will present the vanishing and splitting phenomena at infinity.  In order to define the splitting at infinity we follow \cite{DJT}, and thus we first introduce the notion of limit of sets:  let $ \{ M_t \}_{t\in \bR}$ be a family of sets in $ \bR^2$, the limit set of the family $ \{ M_t \}_{t\in \bR}$ when $ t \to \lambda $ denoted by $ \lim _{t\to \lambda} M_t$ is the set of points $ x \in \bR^m$ such that there exists a sequence $ t_k \in \bR $ with  $ t_k \to  \lambda$ and a sequence of points $ x_{k} \in M_{t_k}$ such that $ x_{k} \to x$.      

\medskip

\begin{definition}\label{Def:VanishConnectedComponent}\cite{DJT} Let $ \lambda \in \bR$ such that $ \text{Sing} f^{-1}(\lambda)$ is a compact set. 

\noindent 
(i) One says that $ f$ \textit{has a vanishing at infinity at} $ \lambda$, if either $ \lim_{t \nearrow \lambda} \mathop{\text{max}_j} \mathop{ \text{inf}_{q \in F_{t,j}}} \| q\|= \infty$ or $ \lim_{t \searrow \lambda} \mathop{\text{max}_j} \mathop{ \text{inf}_{q \in F_{t,j}}} \| q\|= \infty$.  Otherwise, we say that $ f$ \textit{has no vanishing at infinity at} $\lambda$ and we denote it shortly by $ \text{NV}(\lambda)$.   

\medskip
\noindent
(ii) One says that $f$ \textit{has a splitting at infinity at $\lambda$ } if there exists $ \eta >0$ and a continuous family of analytic paths $ \phi_{t} : \left[ 0,1  \right] \to f_{R}^{-1}(t) $ for $ t \in \left]  \lambda - \eta, \lambda \right[$ or for $ t \in \left]  \lambda , \lambda+ \eta \right[$, such that:

\smallskip

(1) $ \text{Im}\phi_t  \cap M(f) \neq \emptyset$, with $ \lim _{t \nearrow \lambda}\|q_t\| = \infty$ (or $ \lim _{t \searrow \lambda}\|q_t\| = \infty$, resp.)   for any $ q_t \in \text{Im}\phi_t \cap M(f) $, and

(2) the limit set $\lim_{t \nearrow \lambda} \text{Im}\phi_t$ (or $\lim_{t \searrow \lambda} \text{Im}\phi_t$, resp.) is not connected.

\smallskip

\noindent Otherwise we say that $ f$ \textit{ has no splitting at infinity  at $ \lambda$} and we denote it by NS$(\lambda)$.
\end{definition}
%\noindent We say that the connected component   $F_{t,j}$ of the fiber $ f^{-1}_{R}(t)$ is \textit{vanishing at infinity at} $\lambda \in \bR$ if $\lim _{t \nearrow \lambda} \inf_{q \in F_{t,j}} \|x\|=\infty$ (or  $\lim _{t \searrow \lambda} \inf_{q \in F_{t,j}} \|q\|=\infty$). Otherwise, we say that $ F_{t,k}$ \textit{does not vanishes at infinity at} $\lambda \in \bR$ and we denote this situation shortly by NV($F_{t,j},\lambda$).  

The proof of the main theorem relies on the equivalence between the existence of atypical values with vanishing and splitting phenomena.  An equivalence for the existence of atypical values with phenomena at infinity were proved in \cite[Corollary 4.7]{JT} and thus extended in \cite[Theorem 2.8]{DJT}   to the vanishing and splitting in Definition \ref{Def:VanishConnectedComponent}. 

\begin{proposition}\cite[Theorem 2.8]{DJT}\label{Cor: NV and NS forFrR}
Let $ f^{-1}(\lambda)$ be a fibre with a compact singular set. Then \textit{NV}($\lambda$) and NS($\lambda$) if and only if $ \lambda $ is a typical value of $f_R$.  
\end{proposition}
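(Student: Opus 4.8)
The plan is to prove the two implications of the equivalence separately, reading typicality of $f_R$ as the existence of a trivializing controlled vector field and interpreting NV($\lambda$) and NS($\lambda$) as the precise obstructions to constructing it. I note at the outset that the statement is exactly \cite[Theorem 2.8]{DJT}, refining \cite[Corollary 4.7]{JT}, so in the paper one may simply cite it; the sketch below records the mechanism behind the two necessary and sufficient conditions.

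For the direction \emph{typical $\Rightarrow$ NV and NS}, suppose $f_R\colon f_R^{-1}(I)\to I$ is $C^\infty$ trivial over a neighbourhood $I$ of $\lambda$, via a diffeomorphism $h\colon f_R^{-1}(I)\to f_R^{-1}(\lambda)\times I$ commuting with the projections. Then each component $F_{t,j}$ equals $h^{-1}(C_j\times\{t\})$ for a fixed component $C_j$ of $f_R^{-1}(\lambda)$, and $(q,t)\mapsto\|h^{-1}(q,t)\|$ is continuous. Fixing $q_0\in C_j$ nearly realizing $\inf_{q\in C_j}\|q\|$, continuity gives $\inf_{q\in F_{t,j}}\|q\|\le\|h^{-1}(q_0,t)\|$, which stays bounded as $t\to\lambda$; taking the finite maximum over $j$ rules out vanishing, so NV($\lambda$) holds. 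For NS($\lambda$), any continuous family of arcs $\phi_t$ is carried by $h$ into the fixed fibre $f_R^{-1}(\lambda)$, where its limit set is connected, contradicting condition (2) in the definition of splitting; hence no splitting family exists.

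For the direction \emph{NV and NS $\Rightarrow$ typical}, which is the substantial one, I would run a Thom--Mather isotopy construction adapted to the non-compact domain $\bR^2\setminus\overline{D}_R$. The goal is a vector field $v$ on $f_R^{-1}(I)$ with $df(v)=1$ whose flow is complete and whose trajectories remain within bounded distance of the origin as $t$ ranges over any compact subinterval of $I$; integrating $v$ then yields the trivialization. Away from $M(f)$ the fibres meet the circles $\rho=\mathrm{const}$ transversally and a standard gradient-type lift of $\partial_t$ exists locally; along the Milnor arcs one uses the strict monotonicity of $f_{|\gamma}$ from Proposition \ref{Prop:RLargEnough}(b) to patch these local fields with a partition of unity without creating zeros. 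The hypotheses are exactly what make the global flow well behaved: NV($\lambda$) forbids a trajectory from escaping to infinity in finite parameter-time, giving completeness and preventing a component from vanishing; NS($\lambda$) forbids the limit set of a trajectory-family from disconnecting, ensuring each connected component is carried to a connected component.

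The main obstacle is this reverse direction, and within it the control of the flow at infinity: on a compact manifold the Thom--Mather lift and its integration are automatic, whereas the non-compactness of $\bR^2\setminus\overline{D}_R$ allows trajectories to run to infinity or to tear components apart, and one must verify that NV and NS respectively exclude these failures. The delicate point is gluing the transversal lift with the lift along the Milnor arcs while keeping $\|\cdot\|$ controlled; this is precisely where the analysis of the Milnor arcs in \S\ref{Milnor arcs at infinity} (their well-defined $\rho$-type of tangency and the monotonicity of $f$ along them) is used.
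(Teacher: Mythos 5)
The paper does not prove this proposition at all: it is imported verbatim from \cite[Theorem 2.8]{DJT} (itself an extension of \cite[Corollary 4.7]{JT}), and your opening observation that one may simply cite it is exactly what the paper does. To that extent your submission agrees with the paper.

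Your supplementary sketch, however, should not be presented as a proof. The direction ``typical $\Rightarrow$ NV'' is fine. For ``typical $\Rightarrow$ NS'' the one-line claim that the limit set of $\mathrm{Im}\,\phi_t$ is connected because the arcs are carried into the fixed fibre needs an argument: that fibre is non-compact, the trivialization is close to the identity only on compact sets, and Kuratowski limits of connected sets need not be connected --- one has to use that each component of the fibre is a properly embedded curve to exclude a middle portion of the arcs escaping to infinity between two bounded ends, which is exactly the splitting phenomenon you are trying to rule out. The reverse direction, which you yourself identify as the substantial one, is only a plan: the assertions that NV($\lambda$) gives completeness of the lifted vector field and that NS($\lambda$) forces each component to be carried onto a single component are precisely the content of the cited theorem, and nowhere do you derive them from Definition \ref{Def:VanishConnectedComponent}; the arguments in \cite{JT} and \cite{DJT} in fact proceed by localizing the family of curves at its points at infinity rather than by a global Thom--Mather lift. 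So if this statement had to be proved rather than cited, the proposal would have a genuine gap in the hard implication.
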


Next definition distinguishes two kinds of $\mu$-clusters.  

\begin{definition}\label{Def:EvenOdd} Let $\mathcal{C}$ be a $\mu$-cluster associated to $\lambda$.  We say that $ \mathcal{C}$ is odd (or even) if the number of Milnor arcs at infinity with $ \rho$-maximum type in $ \mathcal{C}$ added by the number of Milnor arcs at infinity with $ \rho$-minimum type in $\mathcal{C}$ is an odd (resp. even) number.  \end{definition}

We are ready to prove the main theorem of this section.

\begin{theorem}\label{The:Atyp and Cluster}
Let $ f : \bR^2 \to \bR $ be a primitive polynomial function. A regular value $\lambda \in \bR$ is an atypical value of $f_{R}$ if and only if there exists an odd $\mu$-cluster associated to $\lambda$. 

\end{theorem}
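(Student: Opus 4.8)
The plan is to reduce the statement, via Proposition \ref{Cor: NV and NS forFrR}, to the equivalence ``there is an odd $\mu$-cluster at $\lambda$'' $\iff$ ``$f$ has a vanishing or a splitting at infinity at $\lambda$'', and then to read off both phenomena from the geometry of the fibre component attached to each cluster by Theorem \ref{Thm: injFunctAlpha}. Since $\lambda$ is a regular value, $\Sing f^{-1}(\lambda)=\emptyset$ is compact, so Proposition \ref{Cor: NV and NS forFrR} applies and $\lambda$ is atypical for $f_R$ exactly when $f$ has a vanishing or a splitting at infinity at $\lambda$. I would carry out the argument in detail for increasing $\mu$-clusters, which govern the behaviour as $t\nearrow\lambda$, and invoke the symmetric statement for decreasing $\mu$-clusters (the case $t\searrow\lambda$).

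The heart of the proof is a local analysis of the component $F_{t,j}=\alpha_t(\mathcal{C})$ attached by Theorem \ref{Thm: injFunctAlpha} to an increasing cluster $\mathcal{C}=\{\gamma_k,\dots,\gamma_{k+l}\}$, for $t\nearrow\lambda$. This component meets every arc of $\mathcal{C}$ at a tangency point $\gamma_i(s_t^i)$ with $s_t^i\to\infty$, hence at radius tending to infinity; being a connected $1$-manifold that is unbounded, it is a simple arc crossing $\gamma_k,\dots,\gamma_{k+l}$ in order and possessing two free ends lying in the outer bands $]\gamma_{k-1},\gamma_k[$ and $]\gamma_{k+l},\gamma_{k+l+1}[$. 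The critical points of $\rho$ restricted to $F_{t,j}$ are precisely its intersections with $M(f)$, i.e.\ the tangency points, and by Lemma \ref{Lem: estractedfromHN} the monotonicity direction of $\rho$ along $F_{t,j}$ reverses at each $\rho$-maximum and each $\rho$-minimum arc (where $\Jac(f,\rho)$ changes sign, cases (i) and (ii)) and is preserved across each $\rho$-inflectional arc (where $\Jac(f,\rho)$ keeps its sign, case (iii)). Each free end then either returns to the boundary circle $\partial D_R$ (``anchored'') or escapes to infinity, according as $\rho$ decreases or increases along $F_{t,j}$ as one leaves the cluster; propagating the monotonicity direction from one end to the other shows that the two ends behave alike precisely when the number of $\rho$-maximum plus $\rho$-minimum arcs of $\mathcal{C}$ is odd, that is, exactly when $\mathcal{C}$ is odd in the sense of Definition \ref{Def:EvenOdd}.

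From this dichotomy I would conclude the matching with the two phenomena. If $\mathcal{C}$ is even, one end is anchored at $\partial D_R$ while the other escapes, so $\inf_{q\in F_{t,j}}\|q\|$ remains equal to $R$ (no vanishing from this component) and $\lim_{t\nearrow\lambda}F_{t,j}$ is a single unbounded arc issuing from $\partial D_R$, hence connected (no splitting). If $\mathcal{C}$ is odd, both ends behave alike: either both escape, and then $\inf_{q\in F_{t,j}}\|q\|\to\infty$, giving a vanishing at infinity at $\lambda$; or both are anchored, and then $F_{t,j}$ is an arch with two feet on $\partial D_R$ whose body recedes to infinity, yielding a continuous family of analytic paths $\phi_t$ along $F_{t,j}$ that meet $M(f)$ at points with $\|q_t\|\to\infty$ and whose limit set is the union of two disjoint legs, i.e.\ a splitting at infinity in the sense of Definition \ref{Def:VanishConnectedComponent}. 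Conversely, any vanishing or splitting at $\lambda$ is organised around Milnor arcs whose values tend to $\lambda$, since outside $\overline{D}_{R_0}$ and away from $M(f)$ the fibres are transverse to the circles (Remark \ref{re:}) and carry no such behaviour; such a phenomenon is therefore realised by the component of some cluster at $\lambda$, which by the dichotomy must be odd. Combining the two implications with Proposition \ref{Cor: NV and NS forFrR} and with the injectivity of Theorem \ref{Thm: injFunctAlpha} (which guarantees that distinct clusters yield distinct components, so the occurrence of the phenomena is counted unambiguously) gives the asserted equivalence.

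I expect the main obstacle to be the rigorous book-keeping in the second step: converting Lemma \ref{Lem: estractedfromHN} into the clean statement that $\rho$-monotonicity reverses exactly at $\rho$-maximum and $\rho$-minimum arcs and is transmitted through $\rho$-inflectional arcs, while handling the case where the extreme arcs $\gamma_k,\gamma_{k+l}$ are themselves inflectional (here one must use Remark \ref{Rem:AlternancyRhoType} and the alternation of the genuine extrema of $\rho$ along $F_{t,j}$, together with the strict monotonicity of $f_{|\gamma}$ from Proposition \ref{Prop:RLargEnough}). A second delicate point is matching the two end-behaviours to the precise definitions, in particular constructing the required continuous family of analytic paths $\phi_t$ with disconnected limit in the ``both anchored'' case and verifying that even clusters, and components not attached to any cluster, contribute neither a vanishing nor a splitting.
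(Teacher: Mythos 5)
Your proposal is correct and follows essentially the same route as the paper: reduce via Proposition \ref{Cor: NV and NS forFrR} to the presence of vanishing or splitting, attach a fibre component to each cluster by Theorem \ref{Thm: injFunctAlpha}, and use the alternation of $\rho$-maximum and $\rho$-minimum arcs (Remark \ref{Rem:AlternancyRhoType}) so that the parity of the extremal arcs decides whether the two ends of the component behave alike --- both escaping (vanishing), both returning to the circle (splitting), or one of each (neither). The paper phrases this by casing on the common $\rho$-type of the first and last extremal arcs rather than by propagating the monotonicity direction of $\rho$ end to end, but the content is the same.
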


\begin{proof}
Let us prove the result for increasing $\mu$-clusters and the analogous reasoning apply to decreasing $\mu$-clusters. Let $\mathcal{C}$ be an increasing $\mu$-cluster associated to $\lambda \in \bR$. Choose $ R >0$ and $ \eta >0$ as in Theorem \ref{Thm: injFunctAlpha} such that the function $ \alpha_t$ is injective for every $ t \in \left] \lambda - \eta , \lambda \right[ $ and by choosing a smaller $\eta$ if necessary we assume that the interval $ ] \lambda- \eta, \lambda [$ contains only regular values of $f$. Let $\gamma_i < \cdots < \gamma_j $ be the consecutive sequence of Milnor arcs at infinity in $\mathcal{C}$. 

\medskip
 If $\mathcal{C}$ is odd, then let $\gamma_{s_1} < \cdots < \gamma_{s_{2k+1}}$ be all the Milnor arcs at infinity in $ \mathcal{C}$ with $\rho$-extrema type.  First let us treat the case when $ \gamma_{s_1}$ has $\rho$-minimum type, and thus  $\gamma_{s_{2k+1}}$ has also a $ \rho$-minimum type since $ \cC$ is odd and Remark \ref{Rem:AlternancyRhoType}. Therefore there exists a global minimum of $\rho$ restricted to $\alpha_t(\mathcal{C})$, that we denote by $\inf_{x \in \alpha_{t}(\mathcal{C})}\|x\|  $. Since $\mathcal{C}$ is increasing and associated to $ \lambda$,  one has that $\lim _{t \nearrow \lambda} \inf_{x \in \alpha_{t}(\mathcal{C})} \|x\|=\infty. $ This proves that  $f$ has a  vanishing at infinity at $ \lambda$ by Definition  \ref{Def:VanishConnectedComponent}.

Now let us treat when $\gamma_{s_1}$ has a $\rho$-maximum type.  By Theorem \ref{Thm: injFunctAlpha}  $ \alpha_t(\cC) $ intersects $ \gamma_{s_1}$ for all values $ t \in ]\lambda-\eta,\lambda[$ and all those values are regular, thus,  from the Implicit Function Theorem we obtain a family of continuous  analytic paths $ \ \phi_t : [0,1] \to \alpha_t(\cC)$ such that $\text{Im}\phi_t \cap \gamma_{s_1} = q_t  $ and $ \lim_{t \nearrow \lambda} \| q_t \|=\infty $.  Since $ \gamma_{s_1}$ has $\rho$-maximum, also $\gamma_{s_{2k+1}} $ has $ \rho$-maximum by Remark \ref{Rem:AlternancyRhoType}. Thus the connected component $ \alpha_t(\cC)$ is bounded,  and $ \alpha_t(\cC) \cap ]\gamma_{s_1-1},\gamma_{s_1}[ \neq \emptyset$ and $ \alpha_t(\cC) \cap ]\gamma_{s_{2k+1}},\gamma_{s_{2k+2}}[ \neq \emptyset$ for all $t \in ] \lambda - \eta , \lambda [$.  This implies that $ \lim_{t \nearrow \lambda} \alpha_t(\cC) \cap ]\gamma_{s_1-1},\gamma_{s_1}[  \neq \emptyset$ and $ \lim_{t \nearrow \lambda}\alpha_t(\cC) \cap ]\gamma_{s_{2k+1}},\gamma_{s_{2k+2}}[ \neq \emptyset$.  On the other hand,  if we denote by $ q^i_{t} $ the element in $ \gamma_{s_i} \cap \alpha_t(\cC)$,  then we have by Proposition \ref{Prop:RLargEnough}  (a) that $ \lim_{t \nearrow \lambda} \| q_t^i \| = \infty$ for $ i= 1, \ldots , 2k+1$.  Consequently, $ \lim_{t \nearrow \lambda} \text{Im}\phi_t $ has at least two connected components, one in $ ] \gamma_{s_1-1}, \gamma_{s_{1}} [$ and another one in $ ]\gamma_{s_{2k+2}, \gamma_{s_{2k+2}}}[$.  This proves that $ \alpha_t 
(\cC)$ has a splitting at infinity.    
%Analogously, if $\gamma_{s_1}$ has a $\rho$-maximum type,  then $ \sup _{x \in \alpha_{t}(\mathcal{C})} \|x\| < \infty $ and one has that  $\lim _{t \nearrow \lambda} \sup _{x \in \alpha_t (\mathcal{C})} \|x\| = \infty $.  On the other hand, $f^{-1}(\lambda)$ has two connected components $ F_{\lambda,s} $ and $ F_{\lambda, r}$ contained in $ \left] \gamma_{i-1}, \gamma_i \right[ $ and $ \left] \gamma_{i+2k} , \gamma_{i+2k+1}  \right[ $ respectively, such  that $ \lim_{t \nearrow \lambda} \alpha_{t}(\mathcal{C}) = F_{\lambda,s} \sqcup  F_{\lambda,r} $. Therefore $ \alpha_t (\mathcal{C})$ splits at infinity by  Definition \ref{Def:VanishConnectedComponent}

Now we prove that if all $ \mu$-clusters associated to $\lambda$ are even, then $f$ has no vanishing and no splitting.  It is enough to show that any even $\mu$-cluster has neither vanishing nor splitting.  Since $ \cC$ is even, let $ \gamma_{s_1} < \cdots < \gamma_{s_{2k}}$ be all the Milnor arcs at infinity $\mathcal{C}$ with $\rho$-extrema type.  From the alternation of the $\rho$-type in Remark \ref{Rem:AlternancyRhoType} $ \gamma_{s_1}$ and $ \gamma_{s_{2k}}$ have different $\rho$-type. Let us assume that $ \gamma_{s_1}$ has $ \rho$-maximum type and thus $ \gamma_{s_{2k}}$ has $\rho$-minimum type.   Hence the intersection $ \alpha_t(\cC)  \cap ] \gamma_{s_{1}-1} , \gamma_{s_{1}} [$ is not empty and is contained in the disk $ D_{\|q_t\|} $, where $ q_t= \gamma_{s_1} \cap \alpha_{t}(\cC)$ for all $ t \in ] \lambda - \eta, \lambda [$.  This implies that  $ \lim_{t \nearrow \lambda}  \alpha_{t}(\cC)\neq \emptyset$. Consequently $ \alpha_t(\cC)$ has no vanishing at infinity.  It is left to be proved that $ \alpha_{t}(\cC)$ has no splitting at infinity. First let us notice that the set  $ \lim_{t \nearrow \lambda} \alpha_{t}(\cC) \cap ] \gamma_{s_{2k}},\gamma_{s_{2k+1}} [ = \emptyset $. Indeed,  since $ \gamma_{s_{2k}}$ has a $\rho$-minimum type,  the set $ \alpha_{t}(\cC) \cap ] \gamma_{s_{2k}}, \gamma_{s_{2k+1}} [$ is contained in the exterior of the disk $ D_{\| q_t^{2k}\|} $, where $ q_{t}^{2k} = \alpha_{t}(\cC) \cap \gamma_{2k}$. Consequently, $ \lim_{t \nearrow \lambda}  \alpha_{t}(\cC)\cap ] \gamma_{s_{2k}},\gamma_{s_{2k+1}} [ =\emptyset $, since $ \lim_{t \nearrow \lambda} \| q_{t}^{2k} \| = \infty$.  With this observation we have that for every family of continuous paths $\phi_{t}:[ 0,1 ] \to \alpha_{t}(\cC)$ as in Definition \ref{Def:VanishConnectedComponent} the limit set  $ \lim_{t \nearrow \lambda} \text{Im}\phi_{t} $ is contained in $\lim_{t \nearrow \lambda} \alpha_{t}(\cC)  \subset ] \gamma_{s_{1}-1} , \gamma_{s_{1}}[$, and thus $ \text{Im}\phi_{t}$ is connected.  With this we proved that $\alpha_{t}(\cC)$ has no splitting at infinity.  Similarly we obtain the same conclusion if  $ \gamma_{s_{1}}  $ is $ \rho$-minimum and thus this prove that if all the $\mu$-clusters associated to $ \lambda$ are even, then NV$(\lambda)$ and NS$(\lambda)$.

From the above proof we conclude that all $\mu$-clusters associated to $\lambda$ are even if and only if NV($\lambda$) and NS($\lambda$) holds. Applying Proposition \ref{Cor: NV and NS forFrR} we conclude that $f$ has no atypical values and this ends the proof.
\end{proof}

%proved that two consecutive the \textit{half-branches at infinity of the curve $ f^{-1}_{y}(0)$}  (the counterpart of the Milnor arcs at infinity) must alternate between crest and valleys along the fibres. 
\section{Algorithmic aspects}\label{sec:AlgotithAspec}

In the following we present the algorithmic aspects for the detection of the regular atypical values at infinity. 

\subsection{Milnor radius at infinity }\label{subsec:MilnorRadius}
Consider $h(x,y)$ the reduced polynomial defining $ M(f)_{\text{red}}$ as in Definition \ref{Def: MM }.  The set $ \mu(M(f))$ is the solutions of the system 
 \begin{equation}\label{Eq:systemMilRad}
\begin{cases}
h(x,y)=0 ,\\
x h_{y}(x,y) - y h_{x}(x,y)=0.
\end{cases} 
 \end{equation}
 
 By Proposition \ref{Prop:RLargEnough} it is enough to choose $ R > {\normalfont\text{max}}_{q \in \mu(M(f))}\{\|q\|\}$ and this choice can be done effectively: by Proposition \ref{Prop:DetecCircleInMM} 
  the set $ \mu(M(f))$ is a union of finitely many points and circles centred at the origin. Moreover, the circles $ \{ x^2 + y^2 -r^2=0 \}$ contained in $ M(f)$ are identified with the factors $ x^2 + y^2 -r^2 $ occurring in the irreducible decomposition of $ h(x,y) $.     By choosing all such factors $ x^2+y^2-r_i^2 $, $i = 1, \ldots , l_1$  and the isolated solutions $ q_i$, $ i = 1, \ldots ,l_2$ of the system (\ref{Eq:systemMilRad}), we take
   \[
R > \text{max}\{ r_1, \ldots , r_{l_1}, \| q_1\| \ldots , \|q_{l_2} \| \}.  
  \]

\subsection{ Regular finite values associated to Milnor arcs. }\label{subse:findValues}

Consider $ \mathbb{P}^2 = \bR^2  \cup \mathbb{P}^1$ the compactification of $\bR^2$ where $\mathbb{P}^1 = L^\infty$ denotes the line at infinity. Let $\overline{M(f)}$  the projective closure of $M(f)$ in $\mathbb{P}^2$. We are interested in the intersection points between $ \overline{M(f)}$ and the line $ L^{\infty}$. By Proposition \ref{Prop:StrenDTLemm} $(a)$, $ M(f)$ is unbounded and thus $ \overline{M(f)}\cap  L^{\infty} $ is non-empty with finitely many points in $ \mathbb{P}^2$. The set $ \overline{M(f)}\cap L^{\infty}$ can be determined by homogenizing with respect to  the variable $z$ the polynomial $\text{Jac}(f,\rho)$ and thus finding its intersection with $z=0$ in $\mathbb{P}^2$. By Theorem \ref{The:Atyp and Cluster}, if $ \lambda$ is a regular atypical value of $f$, then there is at least one Milnor arcs  at infinity $ \gamma$ of $f$ such that either $f\stackrel{\gamma }{\nearrow} \lambda$ or $f\stackrel{\gamma }{\searrow} \lambda$.

\medskip
In what follows we make a brief summary of the effective algorithm presented in \cite{DJT} which detect the Milnor arcs $ \gamma $ such that the restrictions $f_{|\gamma}$ are bounded. For each point $p \in \overline{M(f)} \cap L^{\infty} \subset \mathbb{P}^2$, there exist Milnor arcs at infinity  $\gamma_{i_1} , \ldots , \gamma_{i_r}$ such that $ p$ is in their closures in $\mathbb{P}^2$.  By a linear change of coordinates if necessary we may assume that $ p = \left[ 0 :1:0 \right] \in \overline{M(f)} \cap L^{\infty}$, and consider the chart $\{ y \neq 0 \} $ of $ \mathbb{P}^2$ with local coordinates $ (x,z)$. Set $ \widehat{f}(x,z) = \widetilde{f}(x,1,z) $ and $ \widehat{h}(x,z)= \widetilde{h}(x,1,z) $, where $ \widetilde{f}(x,y,z) $ and $\widetilde{h}(x,y,z) $ denote the homogenization with respect to the variable $z$ of $ f(x,y) $ and $ h(x,y)$, respectively. The problem of finding the values $\lambda \in \bR$ associated with the Milnor arcs at infinity $ \gamma_{i_1} , \ldots , \gamma_{i_r} $  is equivalent to find all limits

\begin{equation}\label{Prob: limits}
\lim \frac{ \widehat{f}(x,z)}{z^d}  {\normalfont\text{   for   }} (x,z) \to (0,0) \text{ and } \widehat{h}(x,z)=0.
\end{equation}

Passing to complex variables and considering $ \widehat{f}(x,z)$ and $ \widehat{h}(x,z)$ as holomorphic germs at the origin $(0,0)$ of the chart centred at $p$, the authors in \cite{DJT} applied the Newton-Puiseux algorithm to find a Puiseux parametrization of the  Milnor arcs at infinity $ \gamma_{i_{1}}, \ldots \gamma_{i_{r}} $ of the form $ z = T^n, x= \sum_{1 \leq j} a_j T^j$ where the series may be infinite. The limits in (\ref{Prob: limits}) can be determined by truncating the Newton-Puiseux series to a finite number because the following equality

\begin{equation*}\label{Prob:TruncatedN-Puiseux}
\lim_{T \to 0} \frac{\widehat{f}(\sum_{1 \leq j} a_j T^j, T^n)}{T^{dn}}= \lim_{T \to 0} \frac{\widehat{f}(\sum_{1 \leq j \leq dn} a_j T^j, T^n)}{T^{dn}}=\lambda.
\end{equation*}

\subsection{Detecting $\mu$-clusters associated to $ \lambda$.} \label{Sec:Detecting}

Once we have found the values $ \lambda \in \bR$ as in  \ref{subse:findValues} we shall detect the $\mu$-clusters associated to $ \lambda$.  From Definition \ref{Def:MilArcInft} at every point in  a Milnor arc at infinity $\gamma$ the gradient vector of $f$ is parallel to the position vector and from Proposition  \ref{Prop:RLargEnough} (b) the restriction $ f_{|\gamma}$ is strictly monotone.  Notice that our choice of $ R$ in \ref{subsec:MilnorRadius} is such that $ R>R_0$ where $R_0$ denotes the Milnor radius at infinity of $f$ (see Definition \ref{Def:MilArcInft}) and thus by Proposition \ref{Prop:RLargEnough} we shall test if each Milnor arc at infinity $\gamma$ is increasing or decreasing by the following criteria: let $q \in \gamma \cap C_R$ and let $ \text{sgn} : \bR \to \{ \pm ,0\}$ be the sign function. With this we have: 

\smallskip

\noindent (i) if $ \text{sgn}\langle \text{grad} f(q) , q \rangle = +1$, then $ \gamma$ is an increasing Milnor arc at infinity,
\smallskip

\noindent (ii) if $ \text{sgn}\langle \text{grad} f(q) , q \rangle = -1$, then $ \gamma$ is an decreasing Milnor arc at infinity,

\smallskip

\noindent (iii) if $ \langle \text{grad} f(q) , q \rangle = 0$, then by Proposition \ref{Prop:RLargEnough} and Definition \ref{Def:MilArcInft}, $ \gamma \subset \text{Sing}f$.  In such case we have that $\lambda $ is an atypical value by Definition \ref{Def:Atyicfibres}. 

\smallskip

For the cases (i), (ii) we have that for a fixed $ \lambda$  regular obtained in \ref{subse:findValues}  we classify the Milnor arcs at infinity $ \gamma_{i_{1}}, \ldots , \gamma_{i_{r}}$  associated to $\lambda
$ in two sets: $ \mathcal{I}_{\lambda}$ and $\mathcal{D}_{\lambda}$ which are the sets of Milnor arcs at infinity which are increasing and decreasing respectively and associated to the value $ \lambda$. 

In order to determine the $ \mu$-clusters associated to each $ \lambda$ we choose all sequences $ \gamma_k < \gamma_{k+1}  < \dots  < \gamma_l $ of consecutive Milnor arcs such that $ \gamma_k , \gamma_{k+1} , \ldots , \gamma_l  \in \mathcal{I}_{\lambda} $ (resp. $ \in  \mathcal{D}_{\lambda}$) and   $ \gamma_{l+1} , \gamma_{k-1} \not\in \mathcal{I}_\lambda$ (resp. $ \not\in  \mathcal{D}_{\lambda}$). By Definition \ref{d:cluster}  each such sequence $ \gamma_k < \gamma_{k+1}  < \dots  < \gamma_l $ in $ \mathcal{I}_{\lambda}$ (resp. in $ \mathcal{D}_{\lambda} $) defines an increasing (resp. a decreasing) $\mu $-clusters associated to $ \lambda$. 

\subsection{Detecting regular atypical values of $f$.}\label{subsec:detecAtyp}

After Section \ref{Sec:Detecting} we have identified the $ \mu$-clusters associated to each $ \lambda \in \bR $ as in Section \ref{subse:findValues}.  Thus, applying Lemma \ref{Lem: estractedfromHN} one may identify the $\rho$-type of tangency of each Milnor arc at infinity on each $\mu$-cluster.    Using Definition \ref{Def:EvenOdd} we can classify each cluster associated to $ \lambda$ by its parity.  If there exists an odd $\mu$-cluster $ \cC=\{ \gamma_k , \gamma_{k+1},\ldots , \gamma_l \}$  we have the following criteria:

\smallskip
\noindent  (i) If $\gamma_k$ is has $\rho$-maximum type of tangency, then $ \alpha_t(\cC)$ is splitting at infinity. 

\smallskip
\noindent  (ii) If $\gamma_k$ is has $\rho$-minimum type of tangency, then $ \alpha_t(\cC)$ is vanishing at infinity. 

\smallskip
\noindent
In both cases $ \lambda$ is a regular atypical value.  On the other hand,  if all $\mu$-cluster associated $\lambda$ are even, then $\lambda$ is a typical value.

%\bigskip

%the vanishing and splitting behavior at infinity occurs if and only if there exists an odd $\mu$-cluster $\mathcal{C}$ associated to $\lambda$.  For each $ \mu$-cluster $ \mathcal{C} = \{ \gamma_{i_1},\ldots , \gamma_{i_s} \}$ apply the following process:

%For each Milnor arc at infinity $ \gamma_{i_k}$ of $ f$ choose any $q_{i-1} \in \left] \gamma_{i_{k}-1} , \gamma_{i_k} \right[$  and $ q_{i} \in \left] \gamma_{i_k}, \gamma_{i_k+1} \right[$. Then compare the signs of $ h(q_{i-1})$ and $ h(q_i) $ and apply Lemma \ref{Lem: estractedfromHN} to decide the $ \rho$-type of tangency that $ \gamma_{i_k}$.

% By Theorem \ref{The:Atyp and Cluster}, we obtain de following criteria:
%If for $\lambda $ there exist a $ \mu$-cluster associated to $\lambda$ such that  contains an odd number of Milnor arcs at infinity being $\rho $-extrema, then $ \lambda$ is an atypical value. On the other hand, if for every $\mu $-cluster associated to $ \lambda$ the number of Milnor arcs with $ \rho$-extrema is even, then $ \lambda $  is a typical value.  

\section{ Examples}\label{sec:Examples}

In this section the algorithmic aspects presented in Section \ref{sec:AlgotithAspec} are applied in two examples presented in \cite{TZ}  in order to  detect  their atypical values.   Example \ref{Ex1} shows that it is possible to have both vanishing and splitting phenomena at infinity at the same value $\lambda$.  Example \ref{Ex2} shows that it may exists only even $\mu$-clusters associated to a regular value, and thus the value is not atypical by Theorem \ref{The:Atyp and Cluster}.
For presenting both examples the software \textit{Mathematica} has been used to make the computations.

\begin{example}\label{Ex1}
Consider

$$  f(x,y) =
 x^2 y^3(y^2-25)^2  + 2 xy (y^2-25)(y+25) - (y^4 + y^3 -50 y^2 - 51 y +575
 ). $$ 

We show that the regular value $\lambda = 0$ of $f$ is atypical. Moreover we show that there are two vanishing components and two splitting components at the regular value $0$.  

\smallskip

Following \ref{subsec:MilnorRadius} we first find the number $R>0$ such that the disk $ D_R$ centred at the origin of radius $R$ contains in its interior the set $ \mu(M(f))$: after decomposing $ h(x,y):= \Jac(f,\rho)$ as a product of irreducible polynomials we conclude that $h$ is an irreducible polynomial and it has no irreducible components of the form $ x^2+y^2-r^2 $.  It follows from Proposition \ref{Prop:DetecCircleInMM} that $ M(f)$ does not contain circle components, and thus $ \mu(M(f))$ is the union of finitely many points by Proposition \ref{Prop:MMCompact}.  By Definition \ref{Def: MM } the set $ \mu(M(f))$ is  algebraic and it is defined by the equations $ h=0$ and $ y h_x(x,y)- x h_y(x,y)=0$.  After finding the points in $ \mu(M(f))$ and their distance to the origin,  as in \ref{subsec:MilnorRadius} one concludes that $ R=10$, and thus $ \mu(M(f)) \subset D_{10}$.

\smallskip

By Proposition \ref{Prop:RLargEnough} all circles centred at the origin and containing in its interior $ \mu(M(f))$ are transverse to the Milnor arcs at infinity of $f$. Therefore, the circle  $C_{10}$ centred at the origin of radius $R=10$ intersect all the Milnor arcs at infinity transversely, since $ \mu(M(f)) \subset D_{10} $.  Hence the number of Milnor arcs coincides with the number of solutions of the equations $ h(x,y) = 0 , x^2+y^2 -10^2=0$, and thus, after solving this system, we conclude that there are sixteen solution points:  $ p_1, \ldots , p_{16}$, where order is giving by the angle defined by each  $p_i$ with the positive $x$ axis.   By Proposition \ref{Prop:RLargEnough}  (a) and Definition \ref{Def:ordMilnArc} there are sixteen ordered Milnor arcs at infinity  $ \gamma_1, \ldots , \gamma_{16}$ with the order induced by the points $ p_1, \ldots , p_{16} $ in their intersection with $ C_{10}$.

\smallskip

Let us follow \ref{subse:findValues} to  find the regular values associated to each Milnor arc: the set $ \overline{M(f)} \cap L^{\infty}$ is the set of points $ p=[a:b:0] \in L^{\infty}$ such that $\widetilde{h}(a,b,0)=0$, where $ \widetilde{h}$ denotes the homogenization with respect to the variable $ z$ of $h$.  By computing its solutions we have that $ \overline{M(f)}\cap L^{\infty}=\{ \left[0:1:0\right], [1:0:0],[\sqrt{2}: \sqrt{7}:0],[-\sqrt{2}: \sqrt{7}:0] \}  $.  By applying the truncated Newton-Puiseux process, presented in \cite{DJT} and described in \ref{subse:findValues}, to each point in $ \overline{M(f)}\cap L^{\infty} $ we have:

\[ 0=\lim_{t \to \infty} f(\gamma_3(t)) = \lim_{t \to \infty} f(\gamma_7(t)) = \lim_{t \to \infty} f(\gamma_{10}(t)) = \lim_{t \to \infty} f(\gamma_{14}(t)), \]and all limits over the other Milnor arcs tend either to $ + \infty$ or $ -\infty$. Therefore $ \lambda=0$ is the unique finite value associated to the Milnor arcs at infinity of $f$. 

\smallskip

 Let us detect the $ \mu$-cluster associated to $0$:  following  \ref{Sec:Detecting} we obtain that 
 
 \begin{equation}\label{e:critIncDecr} 
\begin{matrix}
\text{sgn}\langle p_3 ,  \mathop{\text{grad}}f(p_3) \rangle =+1 ,& \text{sgn}\langle p_7 ,  \mathop{\text{grad}}f(p_7) \rangle =+1 ,\\
\text{sgn}\langle p_{10} ,  \mathop{\text{grad}}f(p_{10}) \rangle =+1 ,& \text{sgn}\langle p_{14},  \mathop{\text{grad}}f(p_{14}) \rangle =+1.
\end{matrix}
 \end{equation}

Since for every point in a Milnor arc the  gradient vector at such point is a non zero multiple scalar of the vector position,  one concludes from \eqref{e:critIncDecr} that the set of increasing Milnor arcs associated to $0$ is $\mathcal{I}_{0}=\{\gamma_3,\gamma_7,\gamma_{10},\gamma_{16}\} $ and the set $ \mathcal{D}_0$ of decreasing Milnor arcs associated to $0$ is empty.   Since there is no Milnor arcs in $ \mathcal{I}_0$ that are consecutive and $ \mathcal{D}_0$ is empty,  the $\mu$-clusters associated to $0$ are:   $ \mathcal{C}_{1} := \{ \gamma_3  \} ,  \mathcal{C}_{2} := \{ \gamma_7  \},  \mathcal{C}_{3} := \{ \gamma_{10}  \}, \mathcal{C}_4 : = \{ \gamma_{14}  \}  $ by Definition \ref{d:cluster},  and all of them are increasing.

By Lemma \ref{Lem: estractedfromHN} one concludes that $ \gamma_3 , \gamma_7 $ have $ \rho$-maximum type of tangency and $ \gamma_{10},\gamma_{14}$ have $ \rho$-minimum type of tangency.  Therefore $ \cC_1 ,\cC_2$ have splitting components and $ \cC_3,\cC_4$ have vanishing components as the proof of Theorem \ref{The:Atyp and Cluster}  shows.  Hence we conclude by Theorem \ref{The:Atyp and Cluster} that 
$ \lambda=0$ is an atypical value at infinity of $ f$ since all $ \mu$-clusters associated to $\lambda=0$ are odd.

\end{example}

\begin{example}\label{Ex2}
 Consider
\[f(x,y) = 2 x^2 y^3 - 9 x y^2 + 12 y. \] 

In \cite{TZ} the authors proved that $ \mathcal{B}_f  = \emptyset$.  We use our algorithm to show that there are only two $\mu$-clusters associated to $\lambda=0$ and neither of them has vanishing components or splitting components.  The polynomial
\[
h(x,y)= \Jac(f,\rho)=12 x - 18 x^2 y + 6 x^3 y^2 + 9 y^3 - 4 x y^4,
\] defining $M(f)$ is an irreducible polynomial different than $ x^2+y^2-r^2$ for every $ r\in \bR$, and thus there are no circle components in $ \mu(M(f))$ by Proposition \ref{Prop:DetecCircleInMM}.  By Proposition \ref{Prop:MMCompact} one concludes that $\mu( M(f))$ is the union of finitely many points. Following \ref{subsec:MilnorRadius} let us find a radius $ R>0$ such that the disk $D_R$ centred at the origin of radius $R$ contains in its interior the set $ \mu(M(f))$.  By Definition \ref{Def: MM } the set $ \mu(M(f))$ is the common solution of the equations $h=$ and $xh_y-yh_x=0$.  After solving this system and computing the distances to the origin of its finitely many solutions, we obtain that $ \mu(M(f))$ is contained in the interior of the disk $ D_3$.

\smallskip

By Proposition \ref{Prop:RLargEnough} (a) all circles containing in its interior the set $ \mu(M(f))$ are transverse to all Milnor arcs of $f$.  Then the circle $C_{3}$ centred at the origin of radius $R=3$ intersects transversely all the Milnor arcs at infinity of $f$, since $ \mu(M(f)) \subset D_{3} $.   The system of equations $ h(x,y)= 0 , x^2+y^2 -3^2=0$ has ten solution points $ p_1, \ldots , p_{10}$ ordered by their positive angle measured from the positive $x$ axis.  It follows from Definition \ref{Def:ordMilnArc} that $ f$ has ten Milnor arcs at infinity $ \gamma_1, \ldots , \gamma_{10}$ with the ordered induced by the points $ p_1, \ldots , p_{10} $.

\smallskip

The set $ \overline{M(f)} \cap L^{\infty}$ contains the points $[a:b:0] \in L^{\infty}$ such that $ \widetilde{h}(a,b,0)=0$ where $ \widetilde{h}$ denotes the homogenization with respect to the variable $z$ of $h$.     Hence $ \overline{M(f)}\cap L^{\infty}=\{ [ 0:1:0  ],[1:0:0 ],[ \sqrt{2}: \sqrt{3}:0 ],[-\sqrt{2},\sqrt{3}:0 ] \} $.  By applying the truncated Newton-Puiseux process, as in \cite{DJT} and in \ref{subse:findValues},  to each point in $\overline{M(f)} \cap L^{\infty} $ one has: 

\[ 0=\lim_{t \to \infty} f(\gamma_1(t)) = \lim_{t \to \infty} f(\gamma_2(t)) = \lim_{t \to \infty} f(\gamma_{6}(t)) = \lim_{t \to \infty} f(\gamma_{7}(t)), \]and the limits over the other Milnor arcs tend to either $ +\infty$  or  $ - \infty$. This implies that $\lambda=0$ is the unique finite value associated to the Milnor arcs at infinity of $f$.
   
\smallskip

On the other hand, we have that

\begin{equation}\label{e:2critIncDecr}
\begin{matrix}
\text{sgn}\langle p_1 ,  \mathop{\text{grad}}f(p_1) \rangle =-1 ,& \text{sgn}\langle p_2 ,  \mathop{\text{grad}}f(p_2) \rangle =-1 ,\\
\text{sgn}\langle p_{6} ,  \mathop{\text{grad}}f(p_{6}) \rangle =+1 ,& \text{sgn}\langle p_{7},  \mathop{\text{grad}}f(p_{7}) \rangle =+1.
\end{matrix}
\end{equation}

Since at each point $p_1,p_2,p_6,p_7$ the gradient vector of $f$ is a non zero scalar multiple of the position vector,  one concludes from \eqref{e:2critIncDecr} that the set of increasing Milnor arcs associated to $0$ is  $\mathcal{I}_{0}=\{\gamma_6,\gamma_7\}$ and the set of decreasing Milnor arcs associated to $0$ is $ \mathcal{D}_{0}=\{ \gamma_1,\gamma_2 \}$.

 Since $ \gamma_6, \gamma_7$ are consecutive and $ \gamma_1,\gamma_2$ are consecutive,  it follows from Definition \ref{d:cluster} that there are only two $\mu$-clusters associated to $ \lambda=0$.  Therefore by Theorem \ref{The:Atyp and Cluster} $ \lambda=0$ is not an atypical value at infinity of $ f$ since 
the only two $\mu$-clusters associated to it are even. Moreover,  $\mathcal{B}_f =\emptyset$ since there are no other Milnor arcs at infinity of $f$ associated to another finite value.

\end{example}

\end{document}